\providecommand{\U}[1]{\protect\rule{.1in}{.1in}}
\newtheorem{theorem}{Theorem}
\newtheorem{conjecture}[theorem]{Conjecture}
\newtheorem{corollary}[theorem]{Corollary}
\newtheorem{lemma}[theorem]{Lemma}
\newtheorem{proposition}[theorem]{Proposition}
\newenvironment{proof}[1][Proof]{\noindent\textbf{#1.} }{\ \rule{0.5em}{0.5em}}
\DeclareMathOperator{\res}{res}
\DeclareMathOperator{\Span}{span}
\begin{document}

\title{Stability theorems for multiplicities in graded $S_n$-modules}
\author{Marino Romero \thanks{The author was partially supported by the University of
California President's Postdoctoral Fellowship. }
\and Nolan Wallach}
\maketitle

\begin{abstract}
In this paper, we prove several stability theorems for multiplicities of naturally
defined representations of symmetric groups. The first such theorem states
that if we consider the diagonal action of the symmetric group
$S_{m+r}$ on $k$ sets of $m+r$ variables, then the dimension of the invariants of
degree $m$ is the same as the dimension of the invariants of degree $m$ for
$S_{m}$ acting on $k$ sets of $m$ variables. 
Building on this stability, the last section looks at the Hilbert series of coinvariants of the polynomial ring in $k$ sets of $m$ variables. 
We address a conjecture that the Hilbert series, in degrees no more than $m$, can be computed by a truncated power series expression. Using some auxiliary results and manipulations of power series, we show that if this holds for $k$ and $m$, then the truncation gives the correct Hilbert series up to degree $m$ for $k$ sets of $n \geq m$ variables. This shows the validity of the conjecture up to certain degrees.
 We also provide a new equivalent conjecture regarding Gr\"{o}bner bases.

The second type of stability
result is for Weyl modules. We prove that the dimension of the $S_{m+r}$
invariants for a Weyl module ${}_{m+r}F^{\lambda}$ (the Schur-Weyl dual of the
$S_{|\lambda|}$ module $V^{\lambda}$) with $\left\vert \lambda \right\vert  \leq
m$ is of the same dimension as the space of $S_{m}$ invariants for
${}_{m}F^{\lambda}$. Multigraded versions of the first type of result are given, as
are multigraded generalizations to non-trivial modules of symmetric groups.
\end{abstract}

\section{Introduction}

Consider the diagonal action of the symmetric group on $n$ letters, $S_{n}$,
on $k$ sets of variables. A classical theorem of Hermann Weyl asserts that the
polynomial invariants in $k$ sets of variables are generated by the
polarizations of the invariants in one set of variables. Thus, taking a
standard set of generators of the symmetric functions in
$n$ variables, such as the power sums $p_1,\dots, p_n$, and repeatedly polarizing to $k$ sets of $n$ variables, one
gets a set of generators for the invariants for the diagonal action of the
symmetric group on $k$ sets of $n$ variables. This leads to a natural question: what are the
relations among these polynomials (the so-called second fundamental theorem)? This answer is given by the work of Dalbec \cite{Dalbec} or Vaccarino \cite{Vaccarino}, in which the kernel of the projection from $k$ sets of $n+1$ variables to $k$ sets of $n$ variables is given. In particular, their results imply that
 the first relation is in degree $n+2$. We will give an alternate proof of this fact, for it has
several surprising implications. For example, looking upon the  
diagonal action of $S_{n}$ on $k$ sets of variables as the action of $S_{n}$
on $\mathbb{C}^{k}\otimes\mathbb{C}^{n}$ given by sending $s \in S_n $ to $I\otimes s$, 
if $m\leq n$ then
$$
\dim S^{m}(\mathbb{C}^{k}\otimes\mathbb{C}^{m})^{S_{m}}=\dim S^{m}%
(\mathbb{C}^{k}\otimes\mathbb{C}^{n})^{S_{n}}.
$$
Here, if $M$ is an $S_{n}$ module, $M^{S_{n}}$ is the space of invariants.

In this paper all of the results are true over arbitrary fields of
characteristic $0.$ We state them over $\mathbb{C}$, but they can equally well
be stated and proved in the same way over $\mathbb{Q}$.

This stability theorem is equivalent to another of a different sort. For this
we need some notation. If $\lambda,\mu\in\mathbb{Z}^{n}$ then we will say that
$\lambda\succ\mu$ if
$$
\lambda_{1}+...+\lambda_{i}\geq\mu_{1}+...+\mu_{i}, \text{ for } i=1,...,n.
$$
For $\mu\in\mathbb{Z}^{n}$ we define a character of $\left(  \mathbb{C}
^{\times}\right)  ^{n}$, thought of as the subgroup of diagonal matrices in
$GL(n,\mathbb{C})$, by
$$
z\mapsto z^{\mu}=z_{1}^{\mu_{1}}z_{2}^{\mu_{2}}\cdots z_{n}^{\mu_{n}}.
$$
If $\lambda=(\lambda_{1},...,\lambda_{r}) \in\mathbb{Z}^{r}$ with $\lambda
_{1}\geq\lambda_{2}\geq...\geq\lambda_{r}\geq0 $ (that is, $\lambda$ is
dominant) and $r\leq n$, then the theorem of the highest weight implies that
there is a unique up to equivalence irreducible representation of
$GL(n,\mathbb{C})$, ${}_{n}F^{\lambda}$, with the property that if for $\mu\in
\mathbb{Z}^{n}$ there is a nonzero $v\in$ ${}_{n}F^{\lambda}$ such that
$zv=z^{\mu}v$ for $z \in(\mathbb{C}^{*})^{n}$, then $(\lambda_1,\dots,\lambda_r,0,\dots,0) \succ
\mu$.  Another characterization using the Weyl character formula is that
the character of ${}_{n}F^{\lambda}$ is $s_{\lambda}(z_{1},...,z_{n})$, the
Schur symmetric function.

The so-called $GL(k,\mathbb{C})-GL(n,\mathbb{C})$ duality implies that as a
representation of $GL(k,\mathbb{C})\times GL(n,\mathbb{C})$ the space
$S^{r}(\mathbb{C}^{k}\otimes\mathbb{C}^{n})$ is equivalent to%
$$
\bigoplus_{\left\vert \lambda\right\vert =r}{}_{k}F^{\lambda}\otimes{}%
_{n}F^{\lambda}%
$$
where the sum is over dominant $\lambda$ with at most $\min(k,n)$ nonzero entries and
$\left\vert \lambda\right\vert =\sum\lambda_{i} = r$. Using this formalism,
we can give our second main result. We look upon $S_{n}$ as the group of
permutation matrices in $GL(n,\mathbb{C})$, and for $m \leq n$ we identify
$GL(m,\mathbb{C})$ with the group of matrices
$$
\left\{  \left[
\begin{array}
[c]{cc}%
g & 0\\
0 & I
\end{array}
\right]  \Big| ~g\in GL(m,\mathbb{C})\right\}  .
$$

Then for $\left\vert \lambda\right\vert \leq m\leq n$, the following stability
holds:
$$
\dim\left(  {}_{m}F^{\lambda}\right)  ^{S_{m}}=\dim\left(  {}_{n}F^{\lambda
}\right)  ^{S_{n}}.
$$

In the standard Young correspondence between irreducible representations of
$S_{n}$ and partitions of $n$ in the guise of Young diagrams, the trivial
representation of $S_{n}$ corresponds to the partition $(n)$. If $\mu$ is a
partition of $n$, let $V^{\mu}$ denote the corresponding Young representation
of $S_{n}$. Then our result says that
$$
\dim\mathrm{Hom}_{S_{m}}(V^{(m)},{}_{m}F^{\lambda})=\dim\mathrm{Hom}_{S_{m+r}%
}(V^{(m+r)},{}_{m+r}F^{\lambda}).
$$
We show that this
stability can be extended to non-trivial representations of the symmetric
group as follows: 
\\

If $\mu=(\mu_{1},\mu_{2},...)$ is a partition of $m$ and if
$\lambda$ is dominant with
$$
\left\vert \lambda\right\vert \leq m-\mu_{2},
$$
then
$$
\dim\mathrm{Hom}_{S_{m}}(V^{\mu},{}_{m}F^{\lambda})=\dim\mathrm{Hom}_{S_{m+r}%
}(V^{\mu+re_{1}},{}_{m+r}F^{\lambda}),
$$
with $e_{1},...,e_{l}$ the usual basis of $\mathbb{Z}^{l}$. 
(Here if $\mu=(\mu_{1})$, then we take $\mu_{2}=0$.) 
\\

The last section of the paper is devoted to several conjectures that arise
naturally from the results of the earlier sections. The main conjecture
describes the Hilbert series $h_{k,n}(q)$ of the $S_{n}$ coinvariants in $k$ copies of $n$
indeterminates up to degree $n$, and its computational equivalent may also be found in Bergeron's work \cite{Bergeron} (with further details given in the last section).
By coinvariants, we mean the quotient of the polynomial ring by the ideal generated by the homogeneous invariants of positive degree.
 If
\[
\varphi(q)=\sum_{i=0}^{\infty}a_{i}q^{i}%
\]
is a formal power series then we set%
\[
\varphi(q)_{\leq m}=\sum_{i=0}^{m}a_{i}q^{i}.
\]
The conjecture states
\[
h_{k,n}(q)_{\leq n}=\left(  \prod_{i=1}^{n}\frac{\left(  1-q^{i}\right)
^{\binom{k+i-1}{k-1}}}{(1-q)^{k}}\right)  _{\leq n}.
\]
This implies that if $m\leq n$ then
\[
h_{k,n}(q)_{\leq m}=\left(  \frac{1}{(1-q)^{k(n-m)}}\prod_{i=1}^{m}%
\frac{\left(  1-q^{i}\right)  ^{\binom{k+i-1}{k-1}}}{(1-q)^{k}}\right)_{\leq m}  .
\]

We prove that if the conjecture is proved for $h_{k,m}(q)_{\leq m}$, then it is
true for $h_{k,n}(q)_{\leq m}$ for all $n\geq m$. \ This has been
checked using computer calculations for $k=2$ and $m\leq8$; and for $k=3$ the check has been
done for $m\leq6$. We then have verified the conjecture for all $k$ and $m\leq3$.
\\

The notion of stability of irreducible representations of $S_{n}$ has appeared
in several contexts. For one, the Kronecker coefficients have a stability
condition of Murnaghan \cite{Mur38}, \cite{Mur55}, \cite{KroneckerStability};
and there is stability through the character functions of Orellana and
Zabrocki \cite{OrellanaZabrocki}. More relevant to our work,
Church, Ellenberg, and Farb have shown, in their theory of FI-modules
\cite{FImodules}, that Schur functors satisfy a notion of $S_{n}$-stability
for sufficiently large $n$. Some of their stability assertions are made
precise by our identities. Section 4 contains two conjectures that would imply
a simple expression for the graded character of the $S_{n}$ coinvariants up to
degree $n$ (see also \cite{Bergeron}). This expression would also imply the
stability condition for the space of coinvariants, as indicated in \cite{FImodules}.

The key difference between our work and the earlier authors who observed stability relations of the type studied in this paper is our emphasis on the relationship between the representations of the symmetric group and those of  $GL_n$  (the so called Weyl modules), as studied in \cite{GW} through highest weight theory. Also, perhaps, novel to this paper is the use of the fact that Weyl modules of $GL_n$ restricted to $S_n$ (imbedded as permutation matrices) are direct sums of induced representations from appropriate subgroups.

\section{$S_{n}$ invariants in $S(\mathbb{C}^{k}\otimes\mathbb{C}^{n})$}

We associate to $S(\mathbb{C}^{k}\otimes\mathbb{C}^{n})$ the polynomial ring
$$
\ \mathcal{R} = \mathbb{C}[ x_{i,1},\dots, x_{i,n}: i=1,\dots, k ]
$$
in $k$ sets of variables, viewing $x_{i,j} $ as the generator $e_{i}
\otimes e_{j} \in\mathbb{C}^{k}\otimes\mathbb{C}^{n}.$ The diagonal action of
$s\in S_{n}$ on $\mathcal{R}$ is given by sending $x_{i,j}$ to $x_{i,s(j)}$ for all $i$. The polynomials of total degree $d$ will be denoted
by $\mathcal{R}_{d}$, and we let $\mathcal{R}_{\leq d}$ denote the space of
polynomials of total degree at most $d$.

A basis for the $S_{n}$ invariants of $\mathcal{R}$ can be found by first
choosing a monomial
$$
{\bf{x}}^{\alpha}= \prod_{i=1}^{k} \prod_{j=1}^{n} x_{i,j}^{  \alpha_{i,j}}%
$$
and taking the sum of the elements of the orbit $S_{n}({\bf{x}}^{\alpha})$. Note that the orbit
corresponds to taking the sequence of vectors $(P^{1}(\alpha) ,\dots,
P^{n}(\alpha))$, with
$$
P^{i}(\alpha) = ( \alpha_{1,i},\dots,  \alpha_{k,i}),
$$
and permuting the vectors in all possible ways, since $S_{n}$ acts diagonally.
This means that a representative of the orbit is the multiset of sequences
$P^{\alpha}= [P^{1}(\alpha) ,\dots, P^{n}(\alpha)].$ In other words, we look
at the set created by $P^{1}(\alpha) ,\dots, P^{n}(\alpha)$, including multiplicities.

The dimension of invariants of degree equal to $d$ is equal to the cardinality
of the set $\mathcal{P}_{d}^{n,k}$ of multisets $[P^{1},\dots,P^{n}]$ such
that each $P^{i}=(P_{1}^{i},\dots,P_{k}^{i})$ is a sequence of $k$ nonnegative
integers satisfying
$$
|P^{1}|+\cdots+|P^{n}|=d\text{ with }|P^{i}|=P_{1}^{i}+\cdots+P_{k}^{i}.
$$
The union of all $\mathcal{P}_{d}^{n,k}$ with $d\leq r$ will be denoted by
$\mathcal{P}_{\leq r}^{n,k}$. This implies, for the space of polynomials with
degree less than or equal to $d$, that
$$
\dim\mathcal{R}_{\leq d}^{S_{n}}=|\mathcal{P}_{\leq d}^{n,k}|.
$$

In \cite{Weyl}, Weyl showed that a set of generators for the invariants is
given by the polarized power sums
$$
p_{a}=p_{(a_{1},\dots,a_{k})}=\sum_{i=1}^{n} x_{1,i}^{a_{1}}\cdots x_{k,i}^{a_{k}}%
$$
with $0<|a|=a_{1}+\cdots+a_{k}\leq n$. In other words, every invariant can be
written as a polynomial in the polarized power sums of degree at most $n$.

The number of monomials $p_{a^{1}}\cdots p_{a^{r}}$ we can construct whose
total degree is less than or equal to $n$ is the same as picking a multiset of
sequences
$$
\left[  a^{1}=(a_{1}^{1},\dots,a_{k}^{1}),\dots,a^{r}=(a_{1}^{r},\dots
,a_{k}^{r})\right]  ,
$$
each with a nonzero entry, such that $|a^{1}|+\cdots+|a^{r}|\leq n$. Note
however that since each $a^{i}$ has a nonzero entry, there are at most $n$ of
them. This is in bijection with the set $\mathcal{P}_{\leq n}^{n,k}$ by adding
a sufficient number of sequences whose only entries are zeroes. This proves
that the set
$$
\{p_{a^{1}}\cdots p_{a^{r}}:|a^{1}|+\cdots+|a^{r}|\leq n\}
$$
is not only a spanning set for the space of invariants of degree at most $n$,
but also a basis. Surprisingly, there is a stronger independence satisfied by
these monomials. Though stated in terms of elementary symmetric functions, this independence follows from the work of Dalbec \cite{Dalbec} (or Vaccarino \cite{Vaccarino}). Here, we will use the polarized power sums and utilize the formalism of multisets.
\begin{theorem} \label{thm:independence}
The collection
$$
\{p_{a^{1}}\cdots p_{a^{r}}:|a^{1}|+\cdots+|a^{r}|\leq n+1\}
$$
is a basis for $\mathcal{R}_{\leq n+1}^{S_{n}}$.
\end{theorem}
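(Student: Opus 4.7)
The plan is to combine spanning (which is immediate from Weyl's theorem) with a counting comparison that forces linear independence. By Weyl's theorem the polarized power sums $p_a$ with $0<|a|\leq n$ generate the invariant ring, so the monomials $p_{a^1}\cdots p_{a^r}$ of total degree at most $n+1$ automatically span $\mathcal{R}_{\leq n+1}^{S_n}$. The theorem therefore reduces to showing that the cardinality of the spanning set equals $\dim \mathcal{R}_{\leq n+1}^{S_n}=|\mathcal{P}_{\leq n+1}^{n,k}|$, at which point a spanning set of the right size must be a basis.

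Since the preceding discussion already matches the monomials of total degree at most $n$ with $\mathcal{P}_{\leq n}^{n,k}$, the remaining work is to count monomials of degree exactly $n+1$ and compare with $|\mathcal{P}_{n+1}^{n,k}|$. A monomial of degree $n+1$ corresponds to a multiset $[a^1,\ldots,a^r]$ of nonzero sequences in $\mathbb{Z}_{\geq 0}^k$ with each $|a^i|\leq n$ summing to $n+1$, while (after stripping zero sequences) $\mathcal{P}_{n+1}^{n,k}$ corresponds to such a multiset with $1\leq r\leq n$ (no upper bound on individual weights). For $2\leq r\leq n$ the two descriptions coincide automatically, so the comparison reduces to the extreme cases: on the monomial side $r=n+1$ (a product of $n+1$ unit-weight power sums) is allowed while $r=1$ is forbidden (it would require $|a^1|=n+1>n$); on the $\mathcal{P}$ side the situation is reversed.

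The key numerical observation is that both extreme families have the same cardinality: the multisets of size $n+1$ drawn from the $k$ unit vectors in $\mathbb{Z}_{\geq 0}^k$ and the sequences in $\mathbb{Z}_{\geq 0}^k$ of weight $n+1$ are each counted by $\binom{n+k}{k-1}$ via stars and bars. Hence the two total cardinalities agree, which gives linear independence and thus the basis property.

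The main obstacle, if any, is recognizing that this coincidence is not accidental but reflects the trade-off between a single ``large'' power sum $p_a$ with $|a|=n+1$ (outside Weyl's generating set) and a ``tall'' product of $n+1$ unit-weight power sums $p_{e_i}$. One could promote this to a more constructive proof by expanding each putative $p_a$ of weight $n+1$ in terms of products of lower-weight power sums and exhibiting a triangular change of basis, but the cardinality comparison is the shortest route to the bare statement and the one I would pursue first.
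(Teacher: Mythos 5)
Your proposal is correct and takes essentially the same route as the paper: both reduce to comparing cardinalities in degree exactly $n+1$, identify the bulk of the multisets directly, and observe that the two ``extreme'' families (a single weight-$(n+1)$ vector on one side, a product of $n+1$ unit-weight power sums on the other) are each counted by $\binom{n+k}{k-1}$ via stars and bars. The paper phrases the split as ``all $|P^i|\leq n$'' versus ``some $|P^i|=n+1$'' rather than by the number $r$ of nonzero parts, but the underlying comparison is the same.
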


\begin{proof}
We have shown that the set of monomials $p_{a^{1}}\cdots p_{a^{r}}$ of degree
at most $n$ are a basis for the invariants of degree at most $n$. For
monomials of degree $n+1$, we have to show that the number of possible
monomials in the polarized power sums of degree $n+1$ is equal to the
cardinality of $\mathcal{P}_{n+1}^{n,k}$.

An element $[P^{1},\dots, P^{n}] \in\mathcal{P}^{n,k}_{n+1}$ falls into one of
two cases:

\begin{enumerate}
\item $|P^{i}| \leq n$ for all $i$, or

\item there is an $i$ for which $|P^{i}| =n+1$.
\end{enumerate}

In the first case, we have the monomial
$$
p_{P^{1}}\cdots p_{P^{n}}.
$$
The convention here gives $p_{P^{i}} = 1$ for $P^{i} = (0,\dots, 0)$. This
accounts for all monomials in the polarized power sums with at most $n$
factors. In the second case, we may assume that $|P^{1}|=n+1$ and $|P^{i}|=0$
for $i>1$. The number of such choices is the number of ways of choosing
$P^{1}=(a_{1},\dots,a_{k})$ with
$$
a_{1}+\cdots+a_{k}=n+1,
$$
which is the binomial coefficient
$$
\binom{n+1+k-1}{k-1}=\binom{n+k}{k-1}.
$$
On the other hand, a monomial in the polarized power sums $p_{a^{1}}\cdots
p_{a^{n+1}}$ is of degree $n+1$ precisely when $|a^{i}|=1$ for all $i$. Let
$a_{j}$ be the number of $a^{i}$ whose component $j$ is nonzero. Then picking
such a monomial is equivalent to choosing $a_{1}+\cdots+a_{k}=n+1$, meaning
the number of such choices is again $\binom{n+k}{k-1}$. This shows that the
set of monomials in the polarized power sums whose degree is at most $n$ is a
basis for the invariants of degree at most $n+1$.
\end{proof}

We put the standard symmetric bilinear form $\left\langle (z_{1}%
,...,z_{k}),(w_{1}...,w_{k})\right\rangle =\sum z_{i}w_{i}$ on $\mathbb{C}%
^{k}$. On $\mathbb{C}^{k}\otimes\mathbb{C}^{n}$ we use the tensor product of
the forms on $\mathbb{C}^{k}$ and $\mathbb{C}^{n}$. This form is invariant
under the action of $S_{n}$ given by $\sigma(v\otimes w)=v\otimes\sigma w.$
Then $\mathcal{R}_{d}$ is isomorphic with $S^{d}(\mathbb{C}^{k}\otimes
\mathbb{C}^{n})$, using the form to identify $\mathbb{C}^{k}\otimes
\mathbb{C}^{n}$ with $(\mathbb{C}^{k}\otimes\mathbb{C}^{n})^{\ast}$. Thus we have

\begin{corollary}
\label{dimthm}For $r\leq m\leq n$,
$$
\dim S^{r}(\mathbb{C}^{k}\otimes\mathbb{C}^{m})^{S_{m}}=\dim S^{r}%
(\mathbb{C}^{k}\otimes\mathbb{C}^{n})^{S_{n}}%
$$
and
$$
\sum_{n=1}^{\infty}\dim S^{n}(\mathbb{C}^{k}\otimes\mathbb{C}^{n})^{S_{n}%
}q^{n}=%
{\displaystyle\prod_{r=1}^{\infty}}
\frac{1}{(1-q^{r})^{\binom{r+k-1}{k-1}}}
$$

\end{corollary}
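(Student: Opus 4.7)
The plan is to deduce both equalities directly from Theorem 1 by observing that the basis it produces is homogeneous and indexed by a combinatorial set that does not depend on the number of variables in the stable range. No new technical machinery is needed; the content has been reduced to an enumeration that is manifestly $n$-independent.

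For the first equality, fix $r\leq m\leq n$. Each $p_{a}$ is homogeneous of degree $|a|$, so the basis provided by Theorem 1 for $\mathcal{R}_{\leq m+1}^{S_{m}}$ splits by total degree; in particular, since $r\leq m\leq m+1$, a basis of the degree-$r$ piece $\mathcal{R}_{r}^{S_{m}}\cong S^{r}(\mathbb{C}^{k}\otimes\mathbb{C}^{m})^{S_{m}}$ is given by the monomials $p_{a^{1}}\cdots p_{a^{s}}$ with each $a^{i}\in\mathbb{Z}_{\geq 0}^{k}\setminus\{0\}$ and $|a^{1}|+\cdots+|a^{s}|=r$, up to reordering of the factors. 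Applying Theorem 1 identically with $n$ in place of $m$ produces exactly the same indexing set. Since this set depends only on $k$ and $r$, the two dimensions agree.

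For the generating series, specialize to $m=n=r$ in the previous paragraph: $\dim S^{r}(\mathbb{C}^{k}\otimes\mathbb{C}^{r})^{S_{r}}$ equals the number of multisets of nonzero elements of $\mathbb{Z}_{\geq 0}^{k}$ whose sizes sum to $r$. Since a multiset is determined by a choice, for every nonzero $a\in\mathbb{Z}_{\geq 0}^{k}$, of a nonnegative multiplicity with which $p_{a}$ occurs, the generating function in $q$ factors as
\[
\sum_{r\geq 0}\dim S^{r}(\mathbb{C}^{k}\otimes\mathbb{C}^{r})^{S_{r}}\, q^{r}=\prod_{\substack{a\in\mathbb{Z}_{\geq 0}^{k}\\ a\neq 0}}\frac{1}{1-q^{|a|}}.
\]
Grouping the product according to the common value $|a|=r$ and using that the number of $a\in\mathbb{Z}_{\geq 0}^{k}$ with $|a|=r$ equals the number of weak compositions of $r$ into $k$ parts, namely $\binom{r+k-1}{k-1}$, yields the stated infinite product. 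The only place where one has to be a little careful is in confirming that the monomial basis truly restricts degree by degree, but this is immediate from the homogeneity of each $p_{a}$, so there is no real obstacle beyond invoking Theorem 1.
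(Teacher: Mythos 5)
Your proof is correct and takes essentially the same approach as the paper: both arguments reduce the two claims to enumerating multisets of nonzero vectors $a\in\mathbb{Z}_{\geq 0}^{k}$ by total size $|a^{1}|+\cdots+|a^{s}|$, establishing the first equality because this indexing set is visibly independent of the number of variables once that number is at least $r$, and the second by factoring the resulting generating series over nonzero $a$ and grouping by $|a|$. The only cosmetic difference is that for the first equality the paper counts orbit representatives directly via the multisets $\mathcal{P}_{r}^{n,k}$ and the ``append zero vectors'' bijection, whereas you pass through the polarized-power-sum basis of Theorem 1; the paper has already identified these two counts, and your choice has the mild advantage of using the same $p_{a}$-monomial bookkeeping for both halves of the corollary.
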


\begin{proof}
In the above notation and with our identification, the space $\mathcal{R}_{r}^{S_{n}}=S^{r}%
(\mathbb{C}^{k}\otimes\mathbb{C}^{n})^{S_{n}}$ has dimension equal to
$|\mathcal{P}_{r}^{n,k}|$. It is therefore sufficient to show that
$|\mathcal{P}_{r}^{m,k}|=|\mathcal{P}_{r}^{n,k}|$. Since $n\geq m$, any
element $[P^{1},\dots,P^{n}]\in\mathcal{P}_{r}^{n,k}$ must have at most $r
\leq m$ nonzero $P^{i}$. Therefore, the map that takes $P\in\mathcal{P}%
_{r}^{m,k}$ and adds $n-m$ zero vectors is not only injective, but also
surjective. The two sets then have the same cardinality, and the equality holds.

We will now prove the sum product formula. We have shown that
$$
\dim S^{n}(\mathbb{C}^{k}\otimes\mathbb{C}^{n})^{S_{n}}=|\{p_{a^{1}}\cdots
p_{a^{r}}:|a^{1}|+\cdots+|a^{r}|=n\}|,
$$
which is the coefficient of $q^{n}$ in the generating series
$$
\prod_{a}\frac{1}{1-q^{|a|}},
$$
where the product runs over all nonzero $a\in\mathbb{Z}_{\geq 0}^{k}$. This product is
independent of $n$, and the factor $\frac{1}{1-q^{r}}$ appears with multiplicity the
number of $a$ with $|a|=r$. The number of $a=(a_{1},...,a_{k})$ satisfying
$\sum a_{i}=r$ is given by the binomial coefficient $\binom{r+k-1}{k-1}$.
\end{proof}
\\

We should mention that this generating function has interesting specializations when $k$ is specified. For instance, when $k= 2$, one gets the generating function enumerating plane partitions with a given trace. This is problem 7.99 in Stanley's book \cite{Stanley-Book-1999}. Other specializations give further interesting combinatorial connections.
\\

Recall that if $k,n\in\mathbb{Z}_{>0}$ then $\otimes^{k}\mathbb{C}^{n}$ is a
module for $S_{k}\times GL(n,\mathbb{C})$ with $s\in S_{k}$ permuting the
tensor factors and $g\in GL(n,\mathbb{C})$ acting by $\otimes^{k}g$. As a
representation of $S_{k}\times GL(n,\mathbb{C})$, the module $\otimes
^{k}\mathbb{C}^{n}$ decomposes according to Schur-Weyl duality (c.f.
\cite{GW},9.1.1) as follows:
$$
\otimes^{k}\mathbb{C}^{n}\cong\bigoplus_{%
\begin{array}
[c]{c}%
\lambda\vdash k\\
\ell(\lambda)\leq\min\{n,k\}
\end{array}
}V^{\lambda}\otimes{}_{n}F^{\lambda}.
$$
Here, $V^{\lambda}$ is the Young module for $S_{k}$ corresponding to the
partition $\lambda=(\lambda_{1}\geq\cdots\geq\lambda_{r}>0)$ of size
$|\lambda|=\lambda_{1}+\cdots+\lambda_{r}=k$ with length $\ell(\lambda)=r$;
and ${}_{n}F^{\lambda}$ is the Weyl module for $GL(n,\mathbb{C})$
corresponding to $\lambda$. Let $\varepsilon(\lambda)=\sum_{i=1}^{r}\lambda
_{i}\varepsilon_{i}$, though to simplify notation we may write $\varepsilon (\lambda) = \lambda$ since the $i^{\text{th}}$ coordinate of $\lambda$ is the coefficient of $\varepsilon_i$ in $\varepsilon(\lambda)$. Here, $\varepsilon_{i}$ is the linear functional on the
space of diagonal matrices
$$
h=\left[
\begin{array}
[c]{cccc}%
h_{1} & 0 & \cdots & 0\\
0 & h_{2} & \cdots & 0\\
\vdots & \vdots & \ddots & \vdots\\
0 & 0 & \cdots & h_{n}%
\end{array}
\right]
$$
given by $\varepsilon_{i}(h)=h_{i}$.
The space $\mathfrak{h}$ of diagonal
$n\times n$ matrices is a Cartan subalgebra of $M_{n}(\mathbb{C})$, the Lie
algebra of $GL(n,\mathbb{C})$; and ${}_{n}F^{\lambda}$ is the irreducible
representation of $GL(n,\mathbb{C})$ with highest weight $\varepsilon(\lambda)$
relative to the choice of positive roots $\varepsilon_{i}-\varepsilon_{j}$,
$1\leq i<j\leq n$. 

Consider the subgroup $GL(n-1,\mathbb{C})$ of $GL(n,\mathbb{C})$ consisting of
the matrices%
$$
\left\{  \left[
\begin{array}
[c]{cc}%
g & 0\\
0 & 1
\end{array}
\right]  \Big| ~g\in GL(n-1,\mathbb{C})\right\}  .
$$
Let $\mu=\sum_{i=1}^{n}\mu_{i}\varepsilon_{i}$ with $\mu_{i}\in\mathbb{Z}%
_{\geq0}$ and $\mu_{i}\geq\mu_{i+1}$ for all $i=1,\dots,n-1$ (so that $\mu$ is
a dominant integral weight). The branching theorem (c.f. \cite{GW}, Theorem
8.1.1) implies that if we consider ${}_{n}F^{\mu}$ to be a $GL(n-1,\mathbb{C}%
){}$ module, then
$$
{}_{n}F^{\mu}\Big|_{ GL(n-1,\mathbb{C})} \cong\bigoplus_{%
\begin{array}
[c]{c}%
\nu=\sum_{i=1}^{n-1}\nu_{i}\varepsilon_{i}\\
\mu_{1}\geq\nu_{1}\geq\mu_{2}\geq...\geq\mu_{n-1}\geq\nu_{n-1}\geq\mu_{n}%
\end{array}
}{}_{n-1}F^{\nu}.
$$
This implies that if $\ell(\mu)\leq m<n$ and if $GL(m,\mathbb{C})$ is embedded
in $GL(n,\mathbb{C})$ as%
$$
\left\{  \left[
\begin{array}
[c]{cc}%
g & 0\\
0 & I
\end{array}
\right]  \Big|~ g\in GL(m,\mathbb{C})\right\}  ,
$$
then ${}_{m}F^{\mu}$ occurs as a $GL(m,\mathbb{C})$ submodule of ${}_{n}%
F^{\mu}$.

Let $E_{i,j}\in M_{n}(\mathbb{C})$ be the matrix with all zero entries, except
for a $1$ in position $(i,j)$. One has an action of $M_{n}(\mathbb{C})$, as the
Lie algebra of $GL(n,\mathbb{C})$, on any polynomial representation of
$GL(n,\mathbb{C})$. The theorem of the highest weight implies that the weight
space $({}_{n}F^{\mu})_{\mu}$ is one dimensional, and the $\xi$ weight space of ${}_{n}F^{\mu}$ is spanned by the elements%
$$
E_{i_{1}+1,i_{1}}\cdots E_{i_{r}+1,i_{r}}v
$$
where $v$ is a nonzero element of $({}_{n}F^{\mu})_{\mu}$ and $\varepsilon(\mu
)-\sum_{j=1}^{r}(\varepsilon_{i_{j}}-\varepsilon_{i_{j}+1})=\xi$. Using this
observation we have

\begin{lemma}
\label{lemma2} If $\left\vert \mu\right\vert \leq m\leq n$ and $\xi$ is a
dominant weight of ${}_{n}F^{\mu}$, then $({}_{n}F^{\mu})_{\xi}=({}_{m}F^{\mu
})_{\xi}$.
\end{lemma}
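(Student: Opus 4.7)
The plan is to show both containments $({}_m F^\mu)_\xi \subseteq ({}_n F^\mu)_\xi$ and $({}_n F^\mu)_\xi \subseteq ({}_m F^\mu)_\xi$ separately, working with a fixed nonzero highest weight vector $v \in {}_n F_\mu^\mu$ and identifying ${}_m F^\mu$ with the embedded $GL(m,\mathbb{C})$-submodule of ${}_n F^\mu$ guaranteed by the branching discussion immediately preceding the lemma.

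For the easier containment, I would begin by noting that since $\ell(\mu) \leq |\mu| \leq m$, the weight $\mu$ satisfies $\mu_i = 0$ for $i > m$. The subtorus $T^c = \{h \in (\mathbb{C}^\times)^n : h_1 = \cdots = h_m = 1\}$ commutes with $GL(m,\mathbb{C})$, so it preserves the multiplicity-one $\mu$-isotypic component ${}_m F^\mu$; by Schur's lemma it acts there by a single character, and evaluation at $v$ (whose $(\mathbb{C}^\times)^n$-weight is $\mu$) shows this character is trivial. Hence every weight vector for the torus of $GL(m,\mathbb{C})$ sitting inside ${}_m F^\mu$ is automatically a $(\mathbb{C}^\times)^n$-weight vector whose last $n - m$ coordinates are zero, giving $({}_m F^\mu)_\xi \subseteq ({}_n F^\mu)_\xi$ directly.

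For the reverse inclusion, I would invoke the spanning set for $({}_n F^\mu)_\xi$ recalled just before the lemma: every such vector is a linear combination of products $E_{i_1+1,i_1}\cdots E_{i_r+1,i_r}v$ of weight $\xi$, equivalently with $\mu - \xi = \sum_i c_i (\varepsilon_i - \varepsilon_{i+1})$ where $c_i = |\{j : i_j = i\}|$. The telescoping recursion $c_k - c_{k-1} = \mu_k - \xi_k$ (with $c_0 = c_n = 0$) yields the closed form $c_k = \sum_{j > k}(\xi_j - \mu_j)$. Since $\xi$ is dominant with $|\xi| = |\mu| \leq m$, both $\xi_j$ and $\mu_j$ vanish for $j > m$, and therefore $c_k = 0$ whenever $k \geq m$, meaning no spanning product uses an operator $E_{i+1,i}$ with $i \geq m$. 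Every such product then lies in the $GL(m,\mathbb{C})$-submodule generated by $v$, which is precisely the embedded ${}_m F^\mu$. The main step is this combinatorial vanishing of the $c_k$ for $k \geq m$; once that is in place, the rest is bookkeeping between the two torus actions, handled by the Schur's lemma argument of the previous paragraph.
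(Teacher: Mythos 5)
Your argument is essentially the paper's: the paper likewise uses the spanning set $E_{i_1+1,i_1}\cdots E_{i_r+1,i_r}v$ for $({}_nF^\mu)_\xi$ and observes that, because $\mu_i=\xi_i=0$ for $i>m$, the multiplicity $c_k$ of the simple root $\varepsilon_k-\varepsilon_{k+1}$ in $\mu-\xi$ vanishes for $k\ge m$, so all the lowering operators lie in $\mathfrak{gl}(m)$. You are more explicit about the telescoping identity $c_k=\sum_{j>k}(\xi_j-\mu_j)$ and about the triviality of the complementary torus on ${}_mF^\mu$ (which the paper leaves implicit), but the mechanism is identical.
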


\begin{proof}
Since $\dim({}_{n}F^{\mu})_{\mu}=1$ and $\ell(\mu) \leq m \leq n$, the
assertion is true for $\mu$. Since $\left\vert \mu\right\vert \leq m$,
$\varepsilon(\mu)=\sum_{i=1}^{m}\mu_{i}\varepsilon_{i}$; and since $\xi$ is
dominant, $\xi=\sum_{i=1}^{m}\xi_{i}\varepsilon_{i}.$ Thus if $\xi
=\varepsilon(\mu)-\sum_{j=1}^{r}(\varepsilon_{i_{j}}-\varepsilon_{i_{j+1}})$,
then the maximum of the $i_{j}$ occurring in the expression $E_{i_{1}+1,i_{1}%
}\cdots E_{i_{r}+1,i_{r}}v$ is less than or equal to $m-1$. Thus
$E_{i_{1}+1,i_{1}}\cdots E_{i_{r}+1,i_{r}}v\in({}_{m}F^{\mu})_{\xi}$.
\end{proof}

\begin{lemma}
\label{dimlemma} If $\left\vert \mu\right\vert \leq m\leq n$, then
$\dim\left(  {}_{m}F^{\mu}\right)  ^{S_{m}}\geq\dim\left(  {}_{n}F^{\mu
}\right)  ^{S_{n}}.$
\end{lemma}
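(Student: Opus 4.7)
The idea is to decompose both sides via the torus weight-space decomposition, identify corresponding weight spaces using Lemma~\ref{lemma2}, and then compare them through an inclusion of stabilizers. Since $S_n$ normalizes the diagonal torus $T_n\subset GL(n,\mathbb{C})$, it permutes the $T_n$-weight spaces of ${}_{n}F^{\mu}$, and a standard orbit-sum argument yields
\[
\left({}_{n}F^{\mu}\right)^{S_n}\;=\;\bigoplus_{\xi}\bigl(({}_{n}F^{\mu})_{\xi}\bigr)^{\mathrm{Stab}_{S_n}(\xi)},
\]
where $\xi$ runs over one representative from each $S_n$-orbit of weights. Each orbit contains a unique dominant representative, namely a partition of $|\mu|$, and since $|\mu|\le m$ this partition automatically has length at most $m$. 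The analogous decomposition holds for $({}_{m}F^{\mu})^{S_m}$, indexed by exactly the same set of partitions.

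Next I would apply Lemma~\ref{lemma2}, which tells us that for each partition $\xi$ of $|\mu|$ with $\ell(\xi)\le m$ the two weight spaces agree inside ${}_{n}F^{\mu}$: $({}_{n}F^{\mu})_{\xi}=({}_{m}F^{\mu})_{\xi}$. Writing $k=\ell(\xi)$ and letting $m_i$ denote the multiplicities of the distinct nonzero entries of $\xi$, one has
\[
\mathrm{Stab}_{S_m}(\xi)\;=\;\left(\prod_i S_{m_i}\right)\times S_{m-k}\;\subseteq\;\left(\prod_i S_{m_i}\right)\times S_{n-k}\;=\;\mathrm{Stab}_{S_n}(\xi).
\]
Because the embedding $S_m\hookrightarrow S_n$ agrees with the block embedding $S_m\subset GL(m,\mathbb{C})\subset GL(n,\mathbb{C})$ used in the statement, the $S_m$-actions on this common weight space coming from the two sides coincide; shrinking the acting group only enlarges the invariant subspace, so
\[
\bigl(({}_{n}F^{\mu})_{\xi}\bigr)^{\mathrm{Stab}_{S_n}(\xi)}\;\subseteq\;\bigl(({}_{m}F^{\mu})_{\xi}\bigr)^{\mathrm{Stab}_{S_m}(\xi)}.
\]
Summing over $\xi$ yields the desired inequality $\dim({}_{m}F^{\mu})^{S_m}\ge\dim({}_{n}F^{\mu})^{S_n}$.

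The step that requires most care is the first one, namely the orbit-sum decomposition of $S_n$-invariants. Beyond routine bookkeeping, one must check that every $S_n$-orbit of weights of ${}_{n}F^{\mu}$ meets the partition chamber in a point of length at most $m$ — which is immediate because every weight of a polynomial representation is a nonnegative weak composition of $|\mu|\le m$ — and that assigning an arbitrary $\mathrm{Stab}_{S_n}(\xi)$-invariant vector at each dominant $\xi$ and spreading it across the orbit via $s\cdot v_{\xi}\in({}_{n}F^{\mu})_{s\xi}$ produces a well-defined $S_n$-invariant vector. Once this is in place, Lemma~\ref{lemma2} and the stabilizer inclusion conclude the argument.
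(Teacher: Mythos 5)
Your proof is correct but takes a genuinely different route from the paper. The paper's argument uses Lemma~\ref{lemma2} to observe that the $S_n$-translates of ${}_m F^\mu\subset {}_n F^\mu$ span all of ${}_n F^\mu$, which gives a surjective $S_n$-module map $\mathrm{Ind}_{S_m}^{S_n}({}_m F^\mu)\twoheadrightarrow {}_n F^\mu$; Frobenius reciprocity then yields $\dim\left(\mathrm{Ind}_{S_m}^{S_n} {}_m F^\mu\right)^{S_n}=\dim\left({}_m F^\mu\right)^{S_m}$, and passing $S_n$-invariants through the surjection finishes the proof in one stroke. You instead decompose the invariants weight-by-weight, $({}_n F^\mu)^{S_n}\cong \bigoplus_{\xi}\bigl(({}_n F^\mu)_\xi\bigr)^{\mathrm{Stab}_{S_n}(\xi)}$ with $\xi$ over dominant representatives, apply Lemma~\ref{lemma2} to identify the corresponding weight spaces, and use the stabilizer inclusion $\mathrm{Stab}_{S_m}(\xi)\subseteq\mathrm{Stab}_{S_n}(\xi)$ to get a termwise inequality. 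Both arguments turn on the same two facts — Lemma~\ref{lemma2} and the statement that the $S_n$-orbit of ${}_m F^\mu$ covers ${}_n F^\mu$ — but you unfold the induced module into its explicit orbit-sum decomposition (which is precisely the content of the unnumbered lemma the paper proves immediately after Theorem~\ref{Weyl-stability}), so your comparison of dimensions is termwise rather than aggregate. Your version avoids invoking Frobenius reciprocity directly and pinpoints where any dimension discrepancy would live, namely at dominant weights $\xi$ whose $S_n$-stabilizer is strictly larger than its $S_m$-stabilizer; the paper's version is shorter. One item to verify for full rigor, which you implicitly assert: the dominant index sets on the two sides agree, i.e.\ a partition $\xi$ of $|\mu|$ is a weight of ${}_m F^\mu$ if and only if it is a weight of ${}_n F^\mu$. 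This is immediate from Lemma~\ref{lemma2} (the weight space is the same, hence nonzero on one side iff on the other), but it is needed for the termwise sum to be over a common index set.
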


\begin{proof}
If $\xi$ is a weight of $_{n}F^{\mu}$, then there exists an element $s$ of
$S_{n}$ such that $s\xi$ is dominant. Thus if $\left(  _{n}F^{\mu}\right)
_{\xi}$ is the corresponding weight space, there exists $s\in S_{n}$ such that
$s\xi$ is a weight of $_{m}F^{\mu}$ and $s\left(  _{n}F^{\mu}\right)  _{\xi
}=\left(  _{n}F^{\mu}\right)  _{s\xi}=\left(  _{m}F^{\mu}\right)  _{s\xi}$.
\ This implies that the span of $\left\{  s\left(  {}_{m}F^{\mu}\right)  |s\in
S_{n}\right\}  $ is $_{n}F^{\mu}$. We use the standard notation $\mathbb{C}%
S_{n}$ for the group algebra of $S_{n}$. We have a surjective $S_{n}$ module
homomorphism
$$
\mathrm{Ind}_{S_{m}}^{S_{n}}({}_{m}F^{\mu})=\mathbb{C}S_{n}\otimes
_{\mathbb{C}S_{m}}(_{{}m}F^{\mu})\rightarrow{}_{n}F^{\mu}\rightarrow0.
$$
Now Frobenius reciprocity implies that $\dim\left(  \mathbb{C}S_{n}%
\otimes_{\mathbb{C}S_{m}}(_{{}m}F^{\mu})\right)  ^{S_{n}}=\dim(_{{}m}F^{\mu
})^{S_{m}}$. This implies the inequality.
\end{proof}

The $GL(k,\mathbb{C})$-$GL(n,\mathbb{C})$ duality theorem says that as a
representation of $GL(k,\mathbb{C})\times GL(n,\mathbb{C})$,
$$
S^{r}(\mathbb{C}^{k}\otimes\mathbb{C}^{n})\cong%
{\displaystyle\bigoplus_{%
\begin{array}
[c]{c}%
\lambda\vdash r\\
\ell(\lambda) \leq\min\{n,k\}\mathrm{\ parts}%
\end{array}
}}
{}_{k}F^{\lambda}\otimes{}_{n}F^{\lambda}.
$$
In particular, this implies that if we consider the $S_{n}$ action on
$S^{r}(\mathbb{C}^{k}\otimes\mathbb{C}^{n})$ coming from the restriction of
the action from $GL(n,\mathbb{C})$, then%
$$
\dim S^{r}(\mathbb{C}^{k}\otimes\mathbb{C}^{n})^{S_{n}}=%
{\displaystyle\sum_{%
\begin{array}
[c]{c}%
\lambda\vdash r\\
\ell(\lambda) \leq\min\{n,k\}\mathrm{\ parts}%
\end{array}
}}
\dim{}_{k}F^{\lambda} \times \dim\left(  {}_{n}F^{\lambda}\right)  ^{S_{n}}.
$$

\begin{theorem}
\label{Weyl-stability}If $\lambda$ is a dominant integral weight of
\ $GL(m,\mathbb{C)}$ and $\left\vert \lambda\right\vert \leq m\leq n$, then
$$
\dim\left(  {}_{m}F^{\lambda}\right)  ^{S_{m}}=\dim\left(  {}_{n}F^{\lambda
}\right)  ^{S_{n}}.
$$

\end{theorem}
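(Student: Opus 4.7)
The plan is to combine the global identity from Corollary \ref{dimthm} with the pointwise inequality from Lemma \ref{dimlemma} via $GL(k,\mathbb{C})$--$GL(n,\mathbb{C})$ duality, and then force equality to pass termwise.

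Concretely, set $r=|\lambda|$ and choose an auxiliary $k\geq r$. Apply the duality formula
\[
\dim S^{r}(\mathbb{C}^{k}\otimes\mathbb{C}^{N})^{S_{N}}=\sum_{\mu\dashv r}\dim {}_{k}F^{\mu}\cdot\dim\left({}_{N}F^{\mu}\right)^{S_{N}}
\]
with $N=m$ and with $N=n$. The length restrictions $\ell(\mu)\leq\min\{m,k\}$ and $\ell(\mu)\leq\min\{n,k\}$ that normally appear in the sums are both automatically satisfied for every $\mu\dashv r$, because $\ell(\mu)\leq r\leq\min\{m,n,k\}$. Hence the two sums range over exactly the same index set, and Corollary \ref{dimthm} (applied with the same $k$ and the parameters $r\leq m\leq n$) gives
\[
\sum_{\mu\dashv r}\dim {}_{k}F^{\mu}\cdot\dim\left({}_{m}F^{\mu}\right)^{S_{m}}=\sum_{\mu\dashv r}\dim {}_{k}F^{\mu}\cdot\dim\left({}_{n}F^{\mu}\right)^{S_{n}}.
\]

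Now Lemma \ref{dimlemma} (which applies because $|\mu|=r\leq m\leq n$) gives the termwise inequality $\dim({}_{m}F^{\mu})^{S_{m}}\geq\dim({}_{n}F^{\mu})^{S_{n}}$ for every $\mu\dashv r$, and the weights $\dim {}_{k}F^{\mu}$ are strictly positive (since $\ell(\mu)\leq r\leq k$). An equality of nonnegative sums with positive weights and termwise-dominated summands is only possible when equality holds in every term, so $\dim({}_{m}F^{\mu})^{S_{m}}=\dim({}_{n}F^{\mu})^{S_{n}}$ for all $\mu\dashv r$, and specializing to $\mu=\lambda$ finishes the proof.

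The only real obstacle is the comparison of the index sets: one must verify that the duality decompositions on $\mathbb{C}^{k}\otimes\mathbb{C}^{m}$ and $\mathbb{C}^{k}\otimes\mathbb{C}^{n}$ actually use the same partitions (so that one can subtract and invoke the inequality), and this is where the hypothesis $|\lambda|\leq m$ is used in an essential way. Once the auxiliary $k$ is chosen large enough that $\dim {}_{k}F^{\mu}\neq 0$ for all $\mu\dashv r$, the sandwich of an equality between two termwise-comparable sums with positive coefficients is a standard trick that converts the global identity of Corollary \ref{dimthm} into the desired irreducible-by-irreducible statement.
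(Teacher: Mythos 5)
Your proof is correct and is essentially the argument the paper itself gives: apply $GL(k)$--$GL(N)$ duality to both $N=m$ and $N=n$, use Corollary \ref{dimthm} to equate the two sums, and use Lemma \ref{dimlemma} together with positivity of $\dim {}_k F^\mu$ to force termwise equality. The only cosmetic difference is that you fix $k\geq r$ so that every $\mu\dashv r$ appears with a strictly positive weight, whereas the paper only insists on $\dim {}_k F^\lambda>0$ for the particular $\lambda$ of interest; both choices make the sandwich argument work, and the observation that $\ell(\mu)\leq r\leq\min\{m,n,k\}$ reconciles the index sets in either case.
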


\begin{proof}
We have seen in Corollary \ref{dimthm} that if $r \leq m \leq n$ then
$$
\dim S^{r}(\mathbb{C}^{k}\otimes\mathbb{C}^{m})^{S_{m}}=\dim S^{r}%
(\mathbb{C}^{k}\otimes\mathbb{C}^{n})^{S_{n}}.
$$
This implies that%
$$
\sum_{%
\begin{array}
[c]{c}%
\lambda\vdash r\\
\ell(\lambda) \leq k
\end{array}
}\dim{}_{k}F^{\lambda}\times\left(  \dim\left(  {}_{m}F^{\lambda}\right)
^{S_{m}}-\dim\left(  {}_{n}F^{\lambda}\right)  ^{S_{n}}\right)  =0.
$$
If $k$ is the number of parts of $\lambda$ then $\dim{}_{k}F^{\lambda}>0$. The
previous lemma now implies the the differences are nonnegative, and thus every
term in the sum is nonnegative. For equality to hold, the differences must be
$0$, implying the result.
\end{proof}

In general if $G$ and $H$ are finite groups and $V$ is an $H$ module, then
$$
\mathrm{Ind}_{H}^{G}V=\mathbb{C}G\otimes_{\mathbb{C}H}V.
$$
We note that

\begin{lemma}
As an $S_{n}$ module%
$$
{}_{n}F^{\lambda}\cong\bigoplus_{\mu\text{ a dominant weight of }{}%
_{n}F^{\lambda}}\mathrm{Ind}_{S_{n,\mu}}^{S_{n}}({}_{n}F^{\lambda})_{\mu}%
$$
where $S_{n,\mu}=\{s\in S_{n}|s\mu=\mu\}$.
\end{lemma}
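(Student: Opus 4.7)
The plan is to exploit the fact that $S_n \subset GL(n,\mathbb{C})$ acts on ${}_nF^\lambda$ by permuting the standard basis of $\mathbb{C}^n$, and therefore permutes the weight spaces in a very transparent way: for any $\sigma \in S_n$ and any weight $\mu$, the action of $\sigma$ carries $({}_nF^\lambda)_\mu$ isomorphically onto $({}_nF^\lambda)_{\sigma\mu}$. This is because, writing $\sigma$ as the permutation of coordinates, $zv = z^\mu v$ for $z$ in the diagonal torus implies $z(\sigma v) = \sigma(\sigma^{-1} z \sigma \cdot v) = z^{\sigma \mu}(\sigma v)$.

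First I would decompose ${}_nF^\lambda$ as a direct sum of its weight spaces under the diagonal torus. Since $S_n$ acts by permuting the entries of weights, the set of weights of ${}_nF^\lambda$ breaks up into $S_n$-orbits, and in each orbit there is exactly one dominant representative $\mu$ (obtained by sorting the entries in decreasing order). Thus
\[
{}_nF^\lambda = \bigoplus_{\mu \text{ dominant}} \; \bigoplus_{\sigma \in S_n/S_{n,\mu}} ({}_nF^\lambda)_{\sigma\mu},
\]
and each inner direct sum is an $S_n$-invariant subspace.

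Next I would fix a dominant weight $\mu$ and define the map
\[
\Phi_\mu : \mathbb{C}S_n \otimes_{\mathbb{C}S_{n,\mu}} ({}_nF^\lambda)_\mu \longrightarrow \bigoplus_{\sigma \in S_n/S_{n,\mu}} ({}_nF^\lambda)_{\sigma\mu}, \qquad \sigma \otimes v \longmapsto \sigma \cdot v.
\]
This is well defined since for $\tau \in S_{n,\mu}$ we have $\tau v \in ({}_nF^\lambda)_\mu$, and it is an $S_n$-module homomorphism. Choosing a system of coset representatives $\{\sigma\}$ for $S_n / S_{n,\mu}$, the induced module is $\bigoplus_\sigma \sigma \otimes ({}_nF^\lambda)_\mu$, and $\Phi_\mu$ sends the $\sigma$-component bijectively to $({}_nF^\lambda)_{\sigma\mu}$ by the weight-space observation above. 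Hence $\Phi_\mu$ is an isomorphism onto the orbit sum.

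Summing the isomorphisms $\Phi_\mu$ over all dominant weights $\mu$ of ${}_nF^\lambda$ gives the claimed decomposition. The only subtle point is verifying that $\Phi_\mu$ is well defined and bijective, but this is purely formal: the action of $S_{n,\mu}$ preserves $({}_nF^\lambda)_\mu$, and distinct cosets land in distinct weight spaces, so injectivity and surjectivity on the orbit sum follow from the basic structure of induced modules. There is no real obstacle here — the content is simply recognizing that any representation on which a subgroup acts transitively on a set of distinguished subspaces is an induced module from the stabilizer.
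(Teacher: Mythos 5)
Your proof is correct and follows essentially the same route as the paper: decompose ${}_nF^\lambda$ into weight spaces, observe that $S_n$ permutes them with $\sigma$ carrying $({}_nF^\lambda)_\mu$ to $({}_nF^\lambda)_{\sigma\mu}$, group the weight spaces by $S_n$-orbit noting each orbit contains a unique dominant weight, and then identify each orbit sum with the induced module $\mathrm{Ind}_{S_{n,\mu}}^{S_n}({}_nF^\lambda)_\mu$. The one place you diverge is in justifying uniqueness of the dominant representative: you appeal directly to sorting the coordinates, which is perfectly valid for $S_n$ acting on $\mathbb{Z}^n$ by permutation, whereas the paper gives the general Lie-theoretic argument (writing $s\mu = \mu - Q$ with $Q$ a nonnegative combination of simple roots and comparing $\langle\mu,\mu\rangle$ with $\langle\nu,\nu\rangle$). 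Your verification that $\Phi_\mu$ is an isomorphism is also a bit more direct — you check it is a bijection on each coset component — while the paper establishes surjectivity plus a dimension count; these are equivalent and both correct.
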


\begin{proof}
If $\mu$ is a weight of ${}_{n}F^{\lambda}$ then there exists $s\in S_{n}$
such that $s\mu$ is dominant. Thus since
$$
{}_{n}F^{\lambda}=\bigoplus_{\mu\text{ a weight of }{}_{n}F^{\lambda}}({}%
_{n}F^{\lambda})_{\mu}%
$$
we see that%
$$
{}_{n}F^{\lambda}=\sum_{s\in S_{n},\ \mu\text{ dominant}}s({}_{n}F^{\lambda
})_{\mu}.
$$
We note that if $\mu,\nu$ are dominant weights and if $s\in S_{n}$ is such
that $s\mu=\nu$, then $\mu=\nu$. Indeed, the theorem of the highest weight
implies that $s\mu=\mu-Q$ with $Q$ a non-negative integral combination of
elements of the form $\varepsilon_{i}- \varepsilon_{i+1}$ with $i=1,...,m$. Thus in particular,
$\left\langle \nu,Q\right\rangle \geq0$. Also, since $s\mu=\nu$, this implies
that $\mu=\nu+Q$. Then
$$
\left\langle \mu,\mu\right\rangle =\left\langle \nu,\nu\right\rangle
+2\left\langle \nu,Q\right\rangle +\left\langle Q,Q\right\rangle
\geq\left\langle \nu,\nu\right\rangle +\left\langle Q,Q\right\rangle .
$$
Since
$$
\left\langle \nu,\nu\right\rangle =\left\langle s\mu,s\mu\right\rangle
=\left\langle \mu,\mu\right\rangle ,
$$
this implies that $Q=0$. Thus $\nu=\mu$.

We therefore have%
$$
{}_{n}F^{\lambda}=\bigoplus_{\mu\text{ dominant}}\sum_{s\in S_{n}}s({}%
_{n}F^{\lambda})_{\mu}.
$$
We assert that the $S_{n}$ module $\sum_{s\in S_{n}}s({}_{n}F^{\lambda})_{\mu
}$ is equivalent with $\mathrm{Ind}_{S_{n,\mu}}^{S_{n}}({}_{n}F^{\lambda
})_{\mu}$. Indeed, if $s\in S_{n}$ and
$$
s({}_{n}F^{\lambda})_{\mu}=({}_{n}F^{\lambda})_{\mu},
$$
then $s\mu=\mu$ and thus $s\in S_{n,\mu}$. Let $s_{1},...,s_{r}$ be a set of
representatives for $S_{n}/S_{n,\mu}$. Then the elements $s_{1}\mu
,...,s_{r}\mu$ are distinct and%
$$
\sum_{s\in S_{n}}s({}_{n}F^{\lambda})_{\mu}=\bigoplus_{i=1}^{r}({}%
_{n}F^{\lambda})_{s_{i}\mu}=\bigoplus_{i=1}^{r}s_{i}({}_{n}F^{\lambda})_{\mu},
$$
so
$$
\dim\sum_{s\in S_{n}}s({}_{n}F^{\lambda})_{\mu}=r\dim({}_{n}F^{\lambda})_{\mu
}=\dim\mathrm{Ind}_{S_{n,\mu}}^{S_{n}}({}_{n}F^{\lambda})_{\mu}.
$$
Since the map%
$$
\mathbb{C}S_{n}\otimes_{\mathbb{C}S_{n,\mu}}({}_{n}F^{\lambda})_{\mu
}\rightarrow\sum_{s\in S_{n}}s({}_{n}F^{\lambda})_{\mu}%
$$
given by%
$$
s\otimes v\mapsto sv
$$
is surjective, we conclude that the map is an equivalence.
\end{proof}

Let $|\lambda| \leq m \leq n$. It will be important to see that the action of
$S_{n-\ell(\mu)} $ contained in $S_{n,\mu} = S_{m_{1}} \times\cdots\times
S_{m_{r}}\times S_{n-\ell(\mu)}$ is trivial on $({}_{n}F^{\lambda})_{\mu}=
({}_{\ell(\mu)} F^{\lambda})_{\mu}$. Let $l = \ell(\lambda)$. Embedding
$GL(l,\mathbb{C})$ in $GL(n,\mathbb{C})$ as before,%
$$
\left\{  \left[
\begin{array}
[c]{cc}%
g & 0\\
0 & I
\end{array}
\right]  \Big|~ g\in GL(l,\mathbb{C})\right\}  ,
$$
and embedding $GL(n-l,\mathbb{C})$ by
$$
\left\{  \left[
\begin{array}
[c]{cc}%
I & 0\\
0 & g
\end{array}
\right]  \Big|~ g\in GL(n-l,\mathbb{C})\right\}  ,
$$
gives an embedding of $GL(l,\mathbb{C}) \times GL(n-l,\mathbb{C})$ in
$GL(n,\mathbb{C}).$ The cyclic space of the $(\lambda_{1},\dots,\lambda
_{l},0,\dots,0)$ weight space under $GL(l,\mathbb{C})$ is equivalent to
${}_{l} F^{\lambda}$. As a representation of $GL(l,\mathbb{C}) \times
GL(n-l,\mathbb{C})$, the space ${}_{n}F^{\lambda}$ splits into a direct sum
equivalent to
$$
\bigoplus_{\xi_{1},\xi_{2}} m_{\lambda}(\xi_{1},\xi_{2}) ~{}_{l} F^{\xi_{1}}
\otimes{}_{n-l}F^{\xi_{2}}.
$$
for some multiplicities $m_{\lambda}(\xi_{1},\xi_{2}) $. Consider $m_{\lambda
}(\lambda,\xi)$. If it is nonzero, then $(\lambda,\xi)$ is a weight for
${}_{n}F^{\lambda}$, and so $|\lambda| +|\xi| = |\lambda|$. So $\xi= 0$. This
means that ${}_{n-l} F^{\xi}$ is one dimensional, and that the cyclic space
under $GL(l,\mathbb{C}) \times GL(n-l,\mathbb{C})$ of the $(\lambda_{1},\dots,
\lambda_{l},0,\dots, 0)$ weight space is equivalent to
$$
{}_{l} F^{\lambda}\otimes{}_{n-l}F^{0} = {}_{l}F^{\lambda} \otimes\mathbb{C},
$$
with $GL(n-l,\mathbb{C})$ acting trivially on $\mathbb{C}$.

\begin{lemma}
Let $\mu\neq0$ be a dominant weight of ${}_{n}F^{\lambda}$ and let $\ell(\mu)$
be the last index of $\mu$ which is positive. Then
$$
\ell(\mu) \geq\ell(\lambda).
$$

\end{lemma}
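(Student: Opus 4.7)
The plan is to proceed by contradiction using only the basic dominance-order description of the weights of ${}_{n}F^{\lambda}$ recalled in the introduction, plus the fact that every weight has total sum $|\lambda|$. Write $l = \ell(\lambda)$ and $k = \ell(\mu)$, and assume for contradiction that $k < l$.

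First I would record the two general facts about weights of ${}_{n}F^{\lambda}$ I need:

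\textbf{(a)} The theorem of the highest weight (as stated in the introduction) gives $\Lambda \succ \mu$, where $\Lambda = (\lambda_1, \ldots, \lambda_l, 0, \ldots, 0)$; that is,
\[
\lambda_{1} + \cdots + \lambda_{i} \geq \mu_{1} + \cdots + \mu_{i} \quad \text{for every } 1 \leq i \leq n.
\]

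\textbf{(b)} Because the character of ${}_{n}F^{\lambda}$ is the Schur function $s_{\lambda}(z_{1},\dots,z_{n})$, which is homogeneous of degree $|\lambda|$, every weight $\mu$ of ${}_{n}F^{\lambda}$ satisfies $|\mu|=|\lambda|$.

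Then the contradiction is immediate. Since $\mu$ is dominant with $\mu_{i} \geq 0$ and $\ell(\mu) = k$, we have $\mu_{k+1} = \cdots = \mu_{n} = 0$, so $\mu_{1} + \cdots + \mu_{k} = |\mu| = |\lambda|$ by (b). Applying the inequality in (a) at the index $i = k$ yields
\[
\lambda_{1} + \cdots + \lambda_{k} \;\geq\; \mu_{1} + \cdots + \mu_{k} \;=\; |\lambda| \;=\; \lambda_{1} + \cdots + \lambda_{l}.
\]
Hence $\lambda_{k+1} + \cdots + \lambda_{l} \leq 0$, which, combined with $\lambda_{i} \geq 0$ for each $i$, forces $\lambda_{l} = 0$. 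This contradicts $l = \ell(\lambda)$, so we must have $\ell(\mu) \geq \ell(\lambda)$.

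I do not expect any genuine obstacle here: both ingredients (a) and (b) are explicitly stated earlier in the paper, and the argument is a single application of the dominance inequality at the critical index $i = \ell(\mu)$. The only thing to double-check is that the edge case $\lambda = 0$ is handled, but then $\ell(\lambda) = 0$ and there is nothing to prove; and when $\lambda \neq 0$ the hypothesis $\mu \neq 0$ is automatic from (b). The decomposition under $GL(l,\mathbb{C}) \times GL(n-l,\mathbb{C})$ worked out just before the lemma gives an alternative route (any weight $(\mu',\mu'')$ with $\mu'' \neq 0$ already has positive coordinates past index $l$, and the $\mu'' = 0$ case forces $\mu'$ to have support $\{1,\dots,l\}$ by the same dominance argument applied inside ${}_{l}F^{\xi_{1}}$), but the direct argument above is shorter and seems preferable.
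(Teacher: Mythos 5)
Your proof is correct and uses the same two key facts as the paper's: the dominance $\Lambda \succ \mu$ from the theorem of the highest weight (equivalently, that $\xi = \Lambda - \mu$ has non-negative partial sums and total sum zero) and the equality $|\mu| = |\lambda|$. The paper derives its contradiction by examining the last nonzero index of $\xi$ and showing it forces $\ell(\mu) > \ell(\lambda)$, whereas you apply the dominance inequality once at $i = \ell(\mu)$ and conclude directly that $\lambda_{\ell(\lambda)} = 0$; the underlying argument is essentially the same, with your packaging being a bit more streamlined.
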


\begin{proof}
The theorem of highest weight implies the weights of ${}_{n}F^{\lambda}$,
other than\\ $(\lambda_{1},\dots, \lambda_{\ell(\lambda)},0,\dots,0)$ are of the form
$\mu = (\lambda_{1},\dots, \lambda_{\ell(\lambda)},0,\dots,0) - \xi$ with $(\xi
_{1},\dots,\xi_{n}) \neq0$ satisfying
$$
\xi_{1} \geq0, ~ \xi_{1}+\xi_{2} \geq0, \dots, ~ \xi_{1}+\cdots+\xi_{n-1}
\geq0, \text{ and } \xi_{1}+\cdots+\xi_{n} = 0.
$$
This implies that if $m$ is the last nonzero index of $\xi$, then $\xi_{m}<0$.
If $\ell(\mu) < \ell(\lambda)$, we must have $\xi_{\ell(\lambda)} =
\lambda_{\ell(\lambda)}$, with $\lambda_{\ell(\lambda)} >0$, since
$\lambda_{\ell(\lambda)}-\xi_{\ell(\lambda)} = \mu_{\ell(\lambda)} =0$. This
means we must have $m>\ell(\lambda)$, contradicting $\ell(\mu) < \ell
(\lambda)$.
\end{proof}

\begin{proposition}
\label{proptrivial} If $\mu$ is a dominant weight of ${}_{n}F^{\lambda}$, then
$({}_{\ell(\mu)} F^{\lambda})_{\mu}= ({}_{n}F^{\lambda})_{\mu}$. And if we
embed $GL(n-\ell(\mu),\mathbb{C})$ in $GL_{(}n,\mathbb{C})$ as
$$
\left\{  \left[
\begin{array}
[c]{cc}%
I & 0\\
0 & g
\end{array}
\right]  \Big|~ g\in GL(n-{\ell(\mu)},\mathbb{C})\right\}  ,
$$
then $GL(n-{\ell(\mu)},\mathbb{C})$ acts trivially on $({}_{\ell(\mu)}
F^{\lambda})_{\mu}$.
\end{proposition}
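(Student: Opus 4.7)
The plan is to handle the two assertions separately. For the equality $({}_{\ell(\mu)} F^\lambda)_\mu = ({}_n F^\lambda)_\mu$ I will induct on $n - \ell(\mu)$, using the branching rule from $GL(n,\mathbb{C})$ to $GL(n-1,\mathbb{C})$ recalled earlier. For the triviality of the $GL(n-\ell(\mu),\mathbb{C})$-action I will invoke the $GL(\ell(\mu),\mathbb{C}) \times GL(n-\ell(\mu),\mathbb{C})$-decomposition set up just before the preceding lemma. (In the degenerate case $\mu = 0$ we have $\lambda = 0$ and the statement is trivial, so I assume $\mu \neq 0$ throughout.)

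For the first assertion, the base case $n = \ell(\mu)$ is immediate. In the inductive step take $n > \ell(\mu)$, so that $\mu_n = 0$. The branching rule gives
\[
{}_n F^\lambda\big|_{GL(n-1,\mathbb{C})} \cong \bigoplus_{\nu} {}_{n-1} F^\nu,
\]
summed over $\nu$ interlacing $\lambda$, i.e., $\lambda_i \geq \nu_i \geq \lambda_{i+1}$ for $1 \leq i \leq n-1$. Viewing this as a decomposition of $GL(n-1,\mathbb{C}) \times GL(1,\mathbb{C})$-modules with $GL(1,\mathbb{C})$ sitting in the bottom-right block, a comparison of central characters shows that each summand ${}_{n-1} F^\nu$ carries the constant $GL(1,\mathbb{C})$-weight $|\lambda| - |\nu|$. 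Any $\mu$-weight vector has $GL(1,\mathbb{C})$-weight $\mu_n = 0$, so it comes from summands with $|\nu| = |\lambda|$. The preceding lemma gives $\ell(\lambda) \leq \ell(\mu) < n$, whence $\lambda_n = 0$; combining this with the interlacing inequalities and $|\nu| = |\lambda|$ forces $\nu = (\lambda_1, \ldots, \lambda_{n-1})$. Therefore $({}_n F^\lambda)_\mu = ({}_{n-1} F^\lambda)_\mu$, and the inductive hypothesis finishes the step.

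For the second assertion, consider
\[
{}_n F^\lambda \cong \bigoplus_{\xi_1, \xi_2} m_\lambda(\xi_1, \xi_2)\, {}_{\ell(\mu)} F^{\xi_1} \otimes {}_{n - \ell(\mu)} F^{\xi_2}
\]
as a $GL(\ell(\mu),\mathbb{C}) \times GL(n-\ell(\mu),\mathbb{C})$-module. Since $\mu_i = 0$ for $i > \ell(\mu)$, any $\mu$-weight vector has zero weight under the torus of the second factor, so it must lie in summands whose zero weight space in ${}_{n-\ell(\mu)} F^{\xi_2}$ is nonzero. Weights of $F^{\xi_2}$ sum to $|\xi_2|$, forcing $\xi_2 = 0$, in which case ${}_{n-\ell(\mu)} F^{\xi_2} \cong \mathbb{C}$ and $GL(n-\ell(\mu),\mathbb{C})$ acts trivially. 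Hence $GL(n-\ell(\mu),\mathbb{C})$ fixes every vector of $({}_n F^\lambda)_\mu$, and by the first assertion every vector of $({}_{\ell(\mu)} F^\lambda)_\mu$.

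The main obstacle will be the branching step: one must carefully isolate the $GL(1,\mathbb{C})$ central character on each interlacing summand and then combine the interlacing inequalities with $\lambda_n = 0$ (provided by the preceding lemma) to conclude that exactly one $\nu$ contributes.
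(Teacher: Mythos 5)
Your proof is correct, and the second assertion is handled essentially as the paper does it: pass to the $GL(\ell(\mu),\mathbb{C}) \times GL(n-\ell(\mu),\mathbb{C})$ decomposition set up before the lemma, observe that a $\mu$-weight vector must sit in a summand whose second tensor factor ${}_{n-\ell(\mu)}F^{\xi_2}$ contains the zero weight, and deduce $\xi_2=0$ from the fact that all weights of $F^{\xi_2}$ have coordinate sum $|\xi_2|$. Where you genuinely diverge from the paper is the first assertion. The paper argues directly: elements of $({}_nF^\lambda)_\mu$ are spanned by $E_{j_1,i_1}\cdots E_{j_k,i_k}v$ with $v$ the highest weight vector, and the observation that the largest $j_r$ cannot exceed $\ell(\mu)$ (else $\mu_{j_r}>0$) places everything inside the $GL(\ell(\mu),\mathbb{C})$-cyclic subspace. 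You instead run an induction on $n-\ell(\mu)$ through the $GL(n)\downarrow GL(n-1)$ branching rule, use the constancy of the $GL(1)$-central character on each interlacing summand ${}_{n-1}F^\nu$ (equal to $|\lambda|-|\nu|$) to show that $\mu_n=0$ forces $|\nu|=|\lambda|$, and then combine interlacing with $\lambda_n=0$ to isolate the single summand $\nu=(\lambda_1,\dots,\lambda_{n-1})$. Both are valid; the paper's argument is shorter and closer to the spirit of Lemma \ref{lemma2}, while your branching argument is more structural, avoids any reference to a PBW-type spanning set, and makes explicit exactly which summand of the restriction carries the weight space, at the cost of an induction and the central-character bookkeeping.
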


\begin{proof}
As is mentioned in the proof of Lemma \ref{lemma2}, we have that every element
of $({}_{n}F^{\lambda})_{\mu}$ is a linear combination of elements of the
form
$$
E_{j_{1},i_{1}}\cdots E_{j_{k},i_{k}}v
$$
with $v\in({}_{n}F^{\lambda})_{(\lambda_{1},\dots,\lambda_{\ell(\lambda)},0,\dots
,0)}$; and furthermore,
$$
(\lambda_{1},\dots,\lambda_{l},0,\dots,0)+\sum_{r=1}^{k}\varepsilon_{j_{r}%
}-\varepsilon_{i_{r}}=\mu.
$$
If $j_{u}$ is the maximum of the $j_{r}$, then $j_{u}\leq\ell(\mu)$ so that
all the $E_{j_{r},i_{r}}$ are in the Lie algebra of $GL(\ell(\mu),\mathbb{C}%
)$. This implies the first assertion. The second statement follows from the
above observations: The cyclic space for $GL(\ell(\mu),\mathbb{C})\times
GL(n-\ell(\mu),\mathbb{C})$ of $({}_{n}F^\lambda)_{(\lambda_{1},\dots,\lambda
_{\ell(\lambda)},0,\dots,0)}$ is ${}_{\ell(\mu)}F^{\lambda}%
\otimes\mathbb{C}$ with $GL(n-\ell(\mu),\mathbb{C})$ acting on $\mathbb{C}$ trivially.
\end{proof}

\section{Stability for general partitions}

Let $\mathcal{R}$ be, as before, the polynomial ring in $k$ sets of $n$
variables. For a given $L=(l_{1},\dots,l_{k})\in\mathbb{Z}^{k}_{\geq0}$, let
$V_{L} = S^{L}(\mathbb{C}^k \otimes \mathbb{C}^n)$ denote the space of homogeneous polynomials of degree $l_{i}$ in the
variables $ x_{i,1},\dots, x_{i,n}$. Clearly, $S^{L}(\mathbb{C}^{k}\otimes\mathbb{C}^{n})$ is invariant under the
action of $GL(n,\mathbb{C})$ and thus of $S_{n}$. Therefore,
$$
\mathcal{R}=\bigoplus_{L \in\mathbb{Z}^{k}_{\geq 0}}V_{L}%
$$
is a grading.
Each homogeneous component decomposes into irreducible $S_{n}$%
-representations
$$
V_{L}\cong\bigoplus_{\mu\vdash n} m_{L}^{\mu} V^{\mu}.
$$
with multiplicities given by
$$
m_{L}^{\mu} = \dim\mathrm{Hom}_{S_{n}}(V^{\mu},S^{L}(\mathbb{C}^{k}%
\otimes\mathbb{C}^{n})).
$$

Now Corollary \ref{dimthm} can be stated as follows: For $n\leq l$, we have
$$
\sum_{|L|=n}m_{L}^{(n)}=\sum_{|L| = n}m_{L}^{(l)}.%
$$

For any partition
$\mu$, the grading
$$
\mathrm{Hom}_{S_{n}}(V^{\mu},S(\mathbb{C}^{k}\otimes\mathbb{C}^{n}%
))=\bigoplus_{L}\mathrm{Hom}_{S_{n}}(V^{\mu},S^{L}(\mathbb{C}^{k}%
\otimes\mathbb{C}^{n}))
$$
has an associated Hilbert series%
$$
h_{k,n}^{\mu}(q_{1},...,q_{k})=\sum_{L}q_{1}^{l_{1}}q_{2}^{l_{2}}\cdots
q_{k}^{l_{k}} m^{\mu}_L.
$$

In this section, we are going to prove the following extension of Corollary \ref{dimthm}. Let $e_{1}$
denote the vector $(1,0,\dots)$. If $\mu$ is a partition of $n$ thought of as
an element of $\mathbb{Z}^{l(\mu)}$,%
$$
\mu+r e_{1}=(\mu_{1}+r,\mu_{2},\dots,\mu_{\ell(\mu)})
$$
is the partition obtained from $\mu$ by adding $r$ to the first component.

\begin{theorem} 
\label{multigradedthm} Let $\mu$ be a partition of $n$, and let $d \leq n-
\mu_{2}$. Then
$$
\sum_{l_{1}+\cdots+l_{k}= d } q_{1}^{l_{1}} \cdots q_{k}^{l_{k}} m^{\mu
}_{(l_{1},\dots,l_{k})} = \sum_{l_{1}+\cdots+l_{k}= d} q_{1}^{l_{1}} \cdots
q_{k}^{l_{k}} m^{\mu{+ r e_{1}} }_{(l_{1},\dots,l_{k})}.
$$

\end{theorem}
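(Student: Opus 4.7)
The plan is to reduce Theorem~\ref{multigradedthm} to a weight-preserving bijection between semi-standard monomial fillings of $\mu$ and of $\mu + r e_1$ in the stable range. Recall from the introduction that
\[
h^{\mu}_{k,n}(q_1,\ldots,q_k) = s_\mu\!\left[\frac{1}{(1-q_1)\cdots(1-q_k)}\right],
\]
and that the coefficient $m^{\mu}_{L}$ of $q^L$ counts the semi-standard fillings of the Young diagram of $\mu$ by monomials in $q_1,\ldots,q_k$ (rows weakly increasing, columns strictly increasing in the chosen monomial order) of total weight $q^L$. The same interpretation applies verbatim to $\mu + re_1$. Thus, equality of the two generating functions in total degree $d$ follows from any weight-preserving bijection
\[
\Phi \colon \{\text{fillings of shape } \mu + re_1 \text{ of total degree } d\} \longrightarrow \{\text{fillings of shape } \mu \text{ of total degree } d\}
\]
valid whenever $d \leq n - \mu_2$.

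I would take $\Phi$ to be the most natural map: delete the leftmost $r$ cells of row~$1$ of a filling $T$ of shape $\mu + re_1$ and shift the remainder of row~$1$ leftward by $r$; inversely, given a filling $T'$ of shape $\mu$, prepend $r$ copies of the constant monomial $1$ to row~$1$. For $\Phi$ to be well defined, the leftmost $r$ cells of row~$1$ of $T$ must each carry the monomial $1$; for the inverse to be a valid semi-standard filling, the strict column inequalities at positions $(1,j)$ versus $(2,j)$ for $j \le \mu_2$ must survive the insertion.

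The main combinatorial input is the following counting lemma: in any filling $T$ of shape $\mu + re_1$ with total degree $d \leq n - \mu_2$, the leftmost $r + \mu_2$ cells of row~$1$ all carry the constant monomial $1$. For this, note that since $1$ is the strict minimum of the chosen monomial order, the strict-columns condition forbids $1$ from appearing anywhere below row~$1$; therefore each of the $n - \mu_1$ cells outside row~$1$ contributes degree at least $1$. The total degree attributable to row~$1$ is then at most $d - (n - \mu_1) \leq \mu_1 - \mu_2$, so at most $\mu_1 - \mu_2$ cells of row~$1$ are non-$1$. The remaining $\geq r + \mu_2$ cells carry $1$, and since row~$1$ is weakly increasing and $1$ is minimal, these occupy the leftmost positions $(1,1),\ldots,(1,r+\mu_2)$. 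Specializing this same argument to shape $\mu$ yields the weaker statement that the leftmost $\mu_2$ cells of row~$1$ of any degree-$\leq d$ filling of shape $\mu$ are $1$'s, which is exactly what is needed for the inverse direction.

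With the lemma in place, the verification that both directions preserve the semi-standard conditions is routine: row~$1$ stays weakly increasing because we only insert or delete minima at the left, and for $j \le \mu_2$ the cell $(1,j)$ in the larger tableau always carries a $1$, either because $j \leq r$, or because for $r < j \leq \mu_2$ the shifted content $T'_{(1,j-r)}$ lies within the $1$-zone of row~$1$ of $T'$; in either case $(2,j)$ is strictly larger. The crux of the argument is the counting lemma, and the role of the bound $d \leq n - \mu_2$ is precisely to guarantee enough $1$'s in the leftmost cells of row~$1$ to absorb the shift.
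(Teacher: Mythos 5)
Your proposal is correct and takes essentially the same approach as the paper's proof: interpret $m^{\mu}_L$ as counting semi-standard monomial fillings, use the fact that $1$ is minimal in the monomial order to show (by the same cell-counting bound $d \leq n - \mu_2$) that the leftmost $\mu_2$ cells of the first row must all be $1$'s, and exhibit the bijection that inserts or deletes $r$ cells containing $1$ in the first row. The paper states this quite tersely with an illustrative example, whereas you spell out the well-definedness and column-strictness checks in both directions, but the key counting lemma and the bijection are identical.
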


Before giving a proof of this theorem, we will first make a few remarks.
Note that Corollary \ref{dimthm} is a special case of this theorem by taking
$\mu=(n)$ so that $\mu_{2}=0$, choosing $d=n$, and setting all $q_{i}=1$.

One key aspect of the
proof of this stabilization result is the following description for the Hilbert series of $S(\mathbb{C}^{k}\otimes\mathbb{C}^{n})$. A proof of this description can be found
in \cite{Rom}; and the total degree version found in
Proposition 5.2 of \cite{OZHowe} can also be used to provide an alternate
proof of the stabilization. 
Fix a total order on the monomials in the $q_{1},...,q_{k}$ \ (e.g.
lexicographic order). A semi-standard filling of the Young diagram of $\mu$ is
a placement of monomials in the diagram such that they are in strictly
increasing order in the columns and weakly increasing order in the rows. The
weight of the filling is the product of the monomials in the filling. The
result used is that the coefficient of $q^{L}=q_{1}^{l_{1}}q_{2}^{l_{2}}\cdots
q_{k}^{l_{k}}$ in $h_{k,n}^{\mu}(q_{1},...,q_{k})$ is the number of fillings
of $\mu$ of weight $q^{L}$. This in plethystic notation says
$$
h_{k,n}^{\mu}(q_{1},...,q_{k})=s_{\mu}\left[  \frac{1}{(1-q_{1})\cdots
(1-q_{k})}\right]  .
$$
Contrast this with the corresponding Hilbert series
$$
\sum_{L}q^{L}\dim\mathrm{Hom}_{GL(n,\mathbb{C)}}({}_{n}F^{\lambda}%
,S^{L}(\mathbb{C}^{k}\otimes\mathbb{C}^{n}))=s_{\lambda}(q_{1},...,q_{k}).
$$
So one also has%
$$
s_{\mu}\left[  \frac{1}{(1-q_{1})\cdots(1-q_{k})}\right]  =\sum_{\lambda
}s_{\lambda}(q_{1},...,q_{k})\dim\mathrm{Hom}_{S_{n}}(V^{\mu},{}_{n}%
F^{\lambda}).
$$
\newline

\begin{proof}[Proof of Theorem \ref{multigradedthm}]
Every semistandard tableau of shape $\mu$ with monomial entries
in $q_{1},\dots,q_{k}$, and total degree $d\leq n-\mu_{2}$ has at least
$\mu_{2}$ cells filled by only $1$'s, meaning every cell in the first row and
below the second part must contain a $1$. Otherwise there would be more than
$n-\mu_{2}$ cells with at least one $q_{i}$, meaning the degree would be
larger than $n-\mu_{2}$. Therefore, the set of semistandard tableaux of shape
$\mu$ with degree less than or equal to $n-\mu_{2}$ is in bijection with the
set of semistandard tableaux of shape $\mu+re_{1}$ with degree less than or
equal to $n-\mu_{2}$. The bijection, adds $r$ cells with entry $1$ to the
first row. Here is an example:
$$
\begin{tikzpicture}[scale=.5] \coordinate (prev) at (0,0);
\foreach \dir in {7,3,2}{ \draw[help lines, line width = .25mm] (prev) -- +(0,1) coordinate (prev); \draw[help lines, line width = .25mm] (prev)+(0,-1) grid +(\dir,0); }; ;
\draw (.5,1.5) node {\tiny$q_1$};
\draw (.5,2.5) node {\tiny$q_2^2 $};
\draw (1.5,1.5) node {\tiny$q_1 $};
\draw (1.5,2.5) node {\tiny$q_2 q_3 $};
\draw (2.5,1.5) node {\tiny$q_4 $};
\draw (5.5,.5) node {\tiny$q_2 $};
\draw (6.5,.5) node {\tiny$q_5 $};
\draw (.5,.5) node {\tiny$1$};
\draw (1.5,.5) node {\tiny$1$};
\draw (2.5,.5) node {\tiny$1$};
\draw (3.5,.5) node {\tiny$1$};
\draw (4.5,.5) node {\tiny$1$};
\end{tikzpicture}~~\leftrightarrow
~~\begin{tikzpicture}[scale=.5] \coordinate (prev) at (0,0);
\foreach \dir in {11,3,2}{ \draw[help lines, line width = .25mm] (prev) -- +(0,1) coordinate (prev); \draw[help lines, line width = .25mm] (prev)+(0,-1) grid +(\dir,0); }; ;
\draw (.5,1.5) node {\tiny$q_1$};
\draw (.5,2.5) node {\tiny$q_2^2 $};
\draw (1.5,1.5) node {\tiny$q_1 $};
\draw (1.5,2.5) node {\tiny$q_2 q_3 $};
\draw (2.5,1.5) node {\tiny$q_4 $};
\draw (9.5,.5) node {\tiny$q_2 $};
\draw (10.5,.5) node {\tiny$q_5 $};
\draw (.5,.5) node {\tiny$1$};
\draw (1.5,.5) node {\tiny$1$};
\draw (2.5,.5) node {\tiny$1$};
\draw (3.5,.5) node {\tiny$1$};
\draw (4.5,.5) node {\tiny$1$};
\draw (5.5,.5) node {\tiny$1$};
\draw (6.5,.5) node {\tiny$1$};
\draw (7.5,.5) node {\tiny$1$};
\draw (8.5,.5) node {\tiny$1$};
\end{tikzpicture}
$$
\end{proof}

Viewing ${}_{n}F^{\lambda}$ as an $S_{n}$ module, one has
$$
{}_{n}F^{\lambda}\cong\bigoplus_{\mu\vdash n}\mathrm{Hom}_{S_{n}}(V^{\mu}%
,{}_{n}F^{\lambda})\otimes V^{\mu}.
$$
We will use the notation
$$
g_{\mu}^{\lambda}= \dim\mathrm{Hom}_{S_{n}}(V^{\mu},{}_{n}F^{\lambda}).
$$
The polynomial in Theorem \ref{multigradedthm} is symmetric in $q_1,\dots, q_k$, meaning the coefficient of
$s_\lambda(q_1,\dots, q_k)$ must be equal on both sides. This implies the following stability.

\begin{corollary}
For $\mu\vdash n$, $|\lambda| \leq n - \mu_{2}$ and $r \geq0$, we have
$$
g^{\lambda}_{\mu}= g^{\lambda}_{\mu+re_{1}}.
$$
\end{corollary}

\section{Stability conjectures and the Quasifree Conjecture}
The purpose of this section is to present a sequence of conjectures with interesting consequences. The first ``quasifree'' conjecture asserts that in the stable range, a classical theorem of Chevalley \cite{Chevalley} for one set of variables is true in $k$ sets of variables. This conjecture is equivalent to a statement about the truncated Hilbert series of the coinvariants that we show is a reformulation of certain conjectural remarks of Bergeron \cite{Bergeron, bosonfermion} for the special case of the Symmetric group and $k$ sets of commuting variables. 

The following conjectures assert a stability of the leading terms of the ideal generated by the invariants without constant term in $k$ sets of variables. The last part of the section gives a method for the proof of both conjectures that asserts that if the conjectures are true for k sets of variables and $n=m$, then they are true for $k$ sets of variables, $n \geq m$ and degrees less than or equal to $m$.\\

Let $\mathcal{I}^{k,n} = (\mathcal{R}^{k,n})^{S_n}$ be the invariants of the polynomial ring $\mathcal{R}^{k,n}$ in $k$ sets of $n$ variables. Let $\mathcal{I}^{k,n}_{\leq d}$ denote the subspace of invariant polynomials of degree $d$ or less, and let $\mathcal{I}^{k,n}_{+}$ denote the space of invariants with no constant term.

We have seen that 
\[
\mathcal{I}^{k,n} = \mathbb{C}[p_\alpha: \alpha \in \mathbb{Z}_{\geq 0 }^k, |\alpha| \leq n ].
\]
In particular, if we let $z_\alpha$ be a formal variable of degree $|\alpha|$, then Theorem \ref{thm:independence} states that the grade-preserving map
\[
T_{k,n}: \mathbb{C}[z_\alpha : |\alpha| \leq n] \rightarrow \mathcal{I}^{k,n} 
\]
given by $T_{k,n}(z_\alpha) = p_\alpha$ is bijective in degrees no greater than $n+1$. 
\\

To go between different polynomial rings, it will be convenient to add an extra subscript to $p_\alpha$ and set 
\[
p_{\alpha,n} = \sum_{j=1}^n x_{1,j}^{\alpha_1} \cdots x_{k,j}^{\alpha_k}
\]
For a given polynomial $f \in \mathcal{R}^{k,n}$, let $\res_m(f)$ be the polynomial one gets by specializing $x_{i,j}$ to $0$ for $j>m$. It is then clear that for $1 \leq m \leq n$, we have $\res_m(p_{\alpha,n} ) = p_{\alpha,m}$. Together with the observation about the map $T_{k,n}$ one has the following result.
\begin{lemma} \label{lemma:res}
If $1 \leq m \leq n+1$, then 
\[
\res_{m} : \mathcal{I}^{k,n}_{\leq m+1} \rightarrow \mathcal{I}^{k,m}_{\leq m+1}
\]
is a linear bijection.
\end{lemma}

Order elements of $(\mathbb{Z}_{\geq 0})^k$ lexicographically, so that $\alpha < \beta$ means that at the smallest index $i$ of disagreement, one has $\alpha_i < \beta_i$. The following conjecture has been checked by computer calculations for the following values of $(k,n)$: 

Let \[\pi: \mathcal{R}^{k,n} \rightarrow \mathcal{R}^{k,n} / \left(\mathcal{I}^{k,n}_+ \right)\]
denote the natural projection of the polynomial ring to its coinvariants. Since $\mathcal{I}^{k,n}$ is graded, the quotient is graded as well. Let $V^{k,n}$ be a graded subspace of $\mathcal{R}^{k,n}$ for which $\pi|_{V^{k,n}}$ gives a linear bijection. Then $V^{k,n}$ is a finite-dimensional subspace and gives a surjective map
\[
V^{k,n} \otimes \mathcal{I}^{k,n} \twoheadrightarrow \mathcal{R}^{k,n}.
\]
The following conjecture states that this map is also injective for degrees no larger than $n$:
\begin{conjecture}[Quasifreeness Conjecture]
Let $V^{k,n}$ be as above. For each $\alpha^1\leq \cdots \leq \alpha^r$, $\alpha^i \in (\mathbb{Z}_{\geq 0 })^k $ with $ d = \sum_i |\alpha^i| \leq n$, choose $v_{\alpha^1,\dots, \alpha^r} \in V^{k,n}_{\leq n-d}$. If
\[
\sum_{ \alpha^1\leq \cdots \leq \alpha^r } v_{\alpha^1,\dots, \alpha^r} p_{\alpha^1,n} \cdots p_{\alpha^r,n} = 0,
\]
then for all $\alpha^1,\dots,\alpha^r$, we have $v_{\alpha^1,\dots, \alpha^r} = 0$.

\end{conjecture}

We recall that if $V = \bigoplus_{ j \geq 0} V_j$ is a graded vector space over $\mathbb{C}$ with $\dim V_j < \infty$, then the Hilbert series of $V$ is the formal sum
\[
h_V(q) = \sum_{j \geq 0 } q^j \dim V_j.
\]
Let $h_{k,n}(q)$ denote the Hilbert series for $\mathcal{R}^{k,n}/\mathcal{I}^{k,n}_+$, meaning that we also have 
$ h_{k,n}(q) = h_{V^{k,n}}(q).
$

For the rest of this section we will be manipulating truncations of formal power series. If $u(q) = u_0 + u_1 q + u_2 q^2 + \cdots$ is a formal power series, we let $u(q)_{\leq m } = u_0 + \cdots + a_m q^m$ be the truncation of $u(q)$ up to degree $m$. The following facts may be easily proven, say by induction on $m$.
If $w(q) = w_0 + w_1 q + \cdots$ is another power series, then we will write $u(q) \leq w(q)$ if for each coefficient we have $u_i \leq w_i$. 
\\

Let $u(q), v(q),$ and $w(q)$ be monic formal power series (meaning $u_0= v_0 = w_0 = 1$). Then
\begin{enumerate}
\item If $(u(q)v(q))_{\leq m } = (w(q)v(q))_{\leq m }$, then $u(q)_{\leq m } = w(q)_{leq m}$. 
\item If $( u(q)/ w(q))_{\leq m}= 1$, then $u(q)_{\leq m} = w(q)_{\leq m}$. 
\item If $u(q) \leq w(q)$, then $u(q)_{\leq m} \leq w(q)_{\leq m}$.
\item Suppose $u(q),w(q) \geq 0$. If $u(q) \leq v^1(q)$ and $w(q) \leq v^2(q)$, then $u(q)w(q) \leq v^1(q)v^2(q).$
\end{enumerate}

\begin{lemma}
The Quasifree Conjecture is equivalent to
\[
h_{k,n}(q)_{\leq n} =  \left(  \prod_{i=1}^n \frac{ (1-q^i)^{ \binom{k+i-1}{k-1}}}{(1-q)^k}   \right)_{\leq n}
\]
\end{lemma}
\begin{proof}
First note that for $k,n \geq 1$,
\[
\sum_{i=1}^n \binom{k+i-1}{k-1} \geq kn.
\]
Therefore,
\[
\varphi(q) = \prod_{i =1 }^n \frac{ (1-q^i)^{\binom{k+i-1}{k-1} }  }{ (1-q)^k}
\]
is a monic polynomial. The Quasifree Conjecture is equivalent to saying that
\[
\left (   \frac{h_{k,n}(q)}{\prod_{i=1}^n (1-q^i)^{\binom{k+i-1}{k-1}}}        \right)_{\leq n }  = \left( \frac{1}{(1-q)^{kn}} \right)_{\leq n},
\]
which in turn can be rewritten as
\[
\left (   \frac{h_{k,n}(q)}{ (1-q)^{kn} \varphi(q) }        \right)_{\leq n }  = \left( \frac{1}{(1-q)^{kn}} \right)_{\leq n}
\]
The first property on formal power series states that we must then have 
\[
\left( \frac{h_{k,n}(q)}{\varphi(q)} \right)_{\leq n} = 1,
\]
and the second property gives us that $h_{k,n}(q)_{\leq n} = \varphi(q)_{\leq n}$, completing the lemma.
\end{proof}

This leads to the following conjecture.
\begin{conjecture} \label{conj:variant}
For $k\geq 1$ and $1 \leq m \leq n$, one has
\[
h_{k,n}(q)_{\leq m} = \left( \frac{1}{(1-q)^{k(n-m)}} h_{k,m}(q) \right)_{\leq m}.
\]
\end{conjecture}
We will prove that this conjecture follows from the Quasifree conjecture in Theorem \ref{thm:Qasifreeimpliesvariant}.
This variant conjecture allows for a wider range of computer calculations for varying values of $k,m,$ and $n$. We will now pursue a better understanding of these conjectures by a refined statement on the Gr\"{o}bner bases for these invariant ideals. 
\\

We start by ordering the variables of $\mathcal{R}^{k,n}$ so that $x_{i,j} > x_{p,q}$ if $q > j$ or $q=j$ and $p>i$. This means
\[
x_{1,1} > x_{2,1} > \cdots > x_{k,1} > x_{1,2} > \cdots > x_{k,2} > \cdots > x_{1,n} > \cdots > x_{k,n}.
\]

We then order the set of monomials in the $x_{i,j}$ by using the graded, reverse lexicographic ordering. For $g \in \mathcal{R}^{k,n}$, let $L(g)$ be its leading monomial in this ordering; and for a subset $S \subseteq \mathcal{R}^{n,k}$, let $L(S) = \{L(g) | g \in S\}$. Recall that a Gr\"{o}bner basis for an ideal $I$ is a set of generators $G$ for $I$ such that $L(G)$ generates the monomial ideal span  $L(I)$. Let $G_{k,n}$ be a Gr\"{o}bner basis for $(\mathcal{I}^{k,n}_+).$
\begin{proposition} \label{prop:Grobnerrestriction}
For $1 \leq m \leq n$, we have $\res_m(G_{k,n}) - \{0\}$ is a Gr\"{o}bner basis for $(\mathcal{I}^{k,m}_+).$
\end{proposition}
\begin{proof}
From the initial observations in Lemma \ref{lemma:res}, we have that $\res_m ( (\mathcal{I}^{k,n}_+ ) ) = ( \mathcal{I}^{k,m}_+ ).$ Suppose $f$ is a homogeneous polynomial for which $\res_m(f) \neq 0$. This means that it contains a monomial with no $x_{i,j}$ where $j > m$; and in our ordering, this monomial would lead any monomial in which $x_{i,j}$ with $j > m$ appears. This means that
\[
L(\res_m(f)) = L(f).
\]
The definition for a Gr\"{o}bner basis is that the span of $L((\mathcal{I}^{k,n}_+))$ is the ideal generated by $L(G_{k,n})$. We have
\begin{align*}
\Span   L ( ( \mathcal{I}^{k,m}_+ ) ) & = \Span L(\res_m(\mathcal{I}^{k,n}_+)) 
\\
&  = \Span \res_m ( L(\mathcal{I}^{k,n}_+)) = \res_m \Span L(\mathcal{I}^{k,n}_+).
\end{align*}
Since $\Span L(\mathcal{I}^{k,n}_+) = \mathcal{R}^{k,n} \Span L(G_{k,n})$, we have
\begin{align*}
\Span L(\mathcal{I}^{k,m}_+) = \res_m \Span L(\mathcal{I}^{k,n}_+) &= \res_m(\mathcal{R}^{k,n}) \res_m ( \Span L(G_{k,n})) \\
& = \mathcal{R}^{k,m} \Span L ( \res_m(G_{k,n})).
\end{align*}
This means $\res_m(G_{k,n}) - \{0\}$ is a basis for $(\mathcal{I}^{k,m}_+)$.
\end{proof}
Let $M_{k,m}$ be the collection of monomials in $\mathcal{R}^{k,m}$ not divisible by any element of $L(\mathcal{I}^{k,m}_+).$
\begin{corollary}
For $1 \leq m \leq n$, we have
\[
M_{k,n} \subseteq \mathbb{C}[x_{i,j} | j > m] M_{k,m}.
\]
\end{corollary}
\begin{proof}
For a composition $\alpha = (\alpha_1,\dots, \alpha_k)$, let
${\bf{x}}_i^\alpha = x_{1,i}^{\alpha_1} \cdots x_{k,i}^{\alpha_k}$. Then a monomial in $\mathcal{R}^{k,n}$ is an expression of the form $u  = \bf{x}_1^{\alpha^1} \cdots \bf{x}_k^{\alpha^k},$ where each $\alpha^i$ is a composition of length $k$. Suppose $u \in G_{k,n}$.

If $y \in \res_m L(\mathcal{I}^{k,n}_+)$ is a nonzero monomial, then it contains no $x_{i,j}$ with $j>m$ and also $u$ is not divisible by $y$. This means that $y$ does not divide $w = {\bf{x}}_1^{\alpha^1} \cdots {\bf{x}}_m^{\alpha^m}$. By the previous lemma, $y \in L(\mathcal{I}^{k,m}_+)$ and therefore $w \in G_{k,m}$. 

This means that
\[
u = w \prod_{j=m+1}^n {\bf{x}}_j^{\alpha^j} \in \mathbb{C}[x_{i,j} | j > m] M_{k,m} ,
\] 
establishing the claim.
\end{proof}

\begin{corollary}
For $1 \leq m \leq n$, 
\[
h_{k,n}(q) \leq \frac{h_{k,m}(q)}{(1-q)^{k(n-m)}}.
\]
\end{corollary}

\begin{theorem} \label{thm:Qasifreeimpliesvariant}
If the Quasifree Conjecture is true for $k$ and $m$, then Conjecture \ref{conj:variant} is true for $k$ and all $n \geq m$. 
\end{theorem}
\begin{proof}
We note that since $M_{k,n}$ gives a basis for $\mathcal{R}^{k,n}/(\mathcal{I}^{k,n}_+)$, we have
\[
\Span(M_{k,n} ) \mathbb{C}[ p_{\alpha,n} |~~ |\alpha|\leq n ] = \mathcal{R}^{k,n}.
\] 
Therefore, since $h_{k,n}(q) = h_{\Span(M_{k,n})}$ by the theory of Gr\"{o}bner bases and the previous corollary, 
\begin{align*}
\frac{1}{(1-q)^{kn}} & \leq \frac{ h_{k,n}(q)}{\prod_{i=1}^n (1-q^i)^{\binom{k+i-1}{k-1}}} \\
& \leq \frac{h_{k,m}(q)}{(1-q)^{k(n-m)}}   \frac{1}{\prod_{i=1}^n (1-q^i)^{\binom{k+i-1}{k-1}}}.
\end{align*}
Hence,
\begin{align*}
\left( \frac{1}{(1-q)^{kn}}  \right)_{\leq m} 
& \leq  \left(\frac{h_{k,m}(q)}{(1-q)^{k(n-m)}}   \frac{1}{\prod_{i=1}^n (1-q^i)^{\binom{k+i-1}{k-1}}} \right)_{\leq m} \\
& =    \left(\frac{h_{k,m}(q)}{(1-q)^{k(n-m)}}   \frac{1}{\prod_{i=1}^m (1-q^i)^{\binom{k+i-1}{k-1}}} \right)_{\leq m}  \\
& =    \left(\frac{h_{k,m}(q)_{\leq m}}{(1-q)^{k(n-m)}}   \frac{1}{\prod_{i=1}^m (1-q^i)^{\binom{k+i-1}{k-1}}} \right)_{\leq m} \\
& =    \left( \frac{ \frac{\prod_{j=1}^m (1-q^i)^{\binom{k+i-1}{m-1}}}{ (1-q)^{km}}    }{(1-q)^{k(n-m)}}   \frac{1}{\prod_{i=1}^m (1-q^i)^{\binom{k+i-1}{k-1}}} \right)_{\leq m}  \\
& = \left(\frac{1}{(1-q)^{kn}} \right)_{\leq m }.
\end{align*}
All the inequalities must therefore be equalities, and we get
\[
\left( \frac{ h_{k,n}(q)}{\prod_{i=1}^n (1-q^i)^{\binom{k+i-1}{k-1}} }    \right)_{\leq m} = \left( \frac{h_{k,m}(q)}{(1-q)^{k(n-m)} }    \frac{1 }{\prod_{i=1}^n (1-q^i)^{\binom{k+i-1}{k-1}}}        \right)_{\leq m}.
\]
The first property on power series truncations gives that this
implies
\[
h_{k,n}(q)_{\leq m} = \left( \frac{ h_{k,m}(q)}{(1-q)^{k(n-m)}} \right)_{\leq m}.
\]
\end{proof}

A Gr\"{o}bner basis $G$ is said to be reduced if for each $g \in G$, no leading term of $h \in G-\{g\}$ divides any monomial in the expansion of $g$.  We now assume that our Gr\"{o}bner basis is reduced. 
\begin{corollary}
Assuming the Quasifree Conjecture is true for $k$ and $m$, if $n \geq m$, then
\[
\res_m: (G_{k,n})_{\leq m } \rightarrow (G_{k,m} )_{\leq m}
\]
is bijective.
\end{corollary}
\begin{proof}
In light of Proposition \ref{prop:Grobnerrestriction}, it is clear that $\res_m((G_{k,n})_{\leq m}) - \{0\}$ is a set of elements of degree at most $m$ from a reduced Gr\"{o}bner basis for $(\mathbb{I}^{k,m}_+)$, meaning it equals $(G_{k,m})_{\leq m }$. If the map is not bijective, then there exists $g \in (G_{k,n})_{\leq m}$ for which $\res_m(g) = 0,$ and therefore every monomial in the expansion of $g$ must contain a variable $x_{i,j}$ with $j>m$. In particular, the leading monomial of $g$ contains an $x_{i,j}$ with $j>m$, and $L((G_{k,n})_{\leq m})$ contains a monomial not in $\mathbb{C}[x_{i,j} | j\leq m]$, meaning
\[
h_{k,n}(q)_{\leq m} \neq \left( \frac{ h_{k,m}(q)}{(1-q)^{k(n-m)}} \right)_{\leq m}.
\]
\end{proof}

This leads to the following conjecture on the Gr\"{o}bner basis:
\begin{conjecture}
For $1 \leq k$ and $1 \leq m \leq n$, one has $L((G_{k,n})_{\leq m}) = L( (G_{k,m})_{\leq m})$. 
\end{conjecture}

\bibliographystyle{abbrv}
\bibliography{stabilizations}

\hspace{1em} 

{\small  \noindent\newline Marino Romero\newline Universit\"{a}t Wien, 
\newline Fakult\"{a}t f\"{u}r Mathematik \newline\emph{E\--mail}: \texttt{marino.romero@univie.ac.at} 
}
\newline{\small  \noindent\newline Nolan
Wallach\newline University of California, San Diego\newline Department of
Mathematics  \newline\emph{E\--mail}: \texttt{nwallach@ucsd.edu} 
}

\end{document}